\documentclass[11pt]{amsart}
\usepackage[margin=1in]{geometry}
\usepackage[utf8]{inputenc}
\usepackage[T1]{fontenc}
\usepackage{amsmath,amssymb,amsthm,bm,graphicx,enumitem}
\usepackage[colorlinks=true,linkcolor=blue,citecolor=blue,urlcolor=blue,breaklinks=true]{hyperref}

\newtheorem{theorem}{Theorem}[section]

\newtheorem{proposition}{Proposition}[section]
\newtheorem{corollary}{Corollary}[section]
\theoremstyle{remark}
\newtheorem{remark}{Remark}[section]
\theoremstyle{definition}
\newtheorem{definition}{Definition}[section]
\newtheorem{example}{Example}[section]
\numberwithin{equation}{section}

\newcommand{\arcsinh}{\operatorname{arcsinh}}

\title{New Explicit Examples of Mannheim Pairs in $\mathbb{R}^3$}

\author{Jos\'e Alan}
\address{Universidade Federal Rural de Pernambuco (UFRPE), Dept. de Matem\'atica, Rua Manoel de Medeiros, s/n, Dois Irm\~aos, Recife, PE, 52171-900, Brazil.}
\email{josealan.santos@ufrpe.br}

\author{Renato Teixeira}
\address{Universidade Federal Rural de Pernambuco (UFRPE), Dept. de Matem\'atica, Rua Manoel de Medeiros, s/n, Dois Irm\~aos, Recife, PE, 52171-900, Brazil.}
\email{renato.teixeira@ufrpe.br}

\author{Thiago Mendon\c{c}a}
\address{Universidade Federal Rural de Pernambuco (UFRPE), Dept. de Matem\'atica, Rua Manoel de Medeiros, s/n, Dois Irm\~aos, Recife, PE, 52171-900, Brazil.}
\email{thiago.mendonca@ufrpe.br}

\subjclass[2020]{Primary 53A04; Secondary 53A05, 53A17}
\keywords{Mannheim curve, Mannheim partner curve, slant helix, explicit parametrization, Euclidean 3-space.}

\begin{document}

\begin{abstract}
In this paper, we construct new explicit parametrizations of Mannheim pairs in Euclidean 3-space. Unlike the known examples in the literature, which are essentially slant helices or general helices, the curve presented here is neither a helix nor a slant helix. We provide the full Frenet apparatus, the explicit Cartesian coordinates expressed in terms of elementary functions, and the corresponding Mannheim partner curve. The construction is based on a specific choice of the angular function that governs the curve's tangent indicatrix, leading to integrable expressions.
\end{abstract}

\maketitle
\section{Introduction}
The study of curves with special geometric properties is a central topic in classical differential geometry. Among these, Mannheim curves---introduced by A. Mannheim in 1878 \cite{Mannheim}---are defined by the existence of a partner curve such that, at corresponding points, the principal normal line of the first curve coincides with the binormal line of the second. If $\alpha$ and $\alpha^*$ are such curves, the pair $(\alpha, \alpha^*)$ is called a Mannheim pair.

It is well known \cite{Liu, OrbayKasap} that a curve $\alpha$, with curvature $\kappa>0$ and torsion $\tau$, is a Mannheim curve if and only if there exists a nonzero constant $\lambda$ such that $\kappa = \lambda \big( \kappa^2 + \tau^2\big)$. 
The literature contains several characterizations and examples of Mannheim curves. Liu and Wang \cite{Liu} established necessary and sufficient conditions for a curve to be a Mannheim partner in Euclidean and Minkowski spaces. Orbay and Kasap \cite{OrbayKasap} derived relationships between the curvatures and torsions of the two curves in a Mannheim pair. More recently, \c{C}ay and Yaylı \cite{Alper} studied slant helices that are also Mannheim curves, showing that such curves form a significant class of explicit examples. In \cite{Thiago}, the correspondence between Bishop frames of Mannheim pairs was analyzed, and in \cite{TS, Thiago1} Mannheim curves were investigated in Heisenberg groups and in the special linear group, respectively. U\c{c}um, Camcı and \.{I}larslan \cite{Ucum} provided a new approach to Mannheim curves, including examples within the classes of Salkowski and anti-Salkowski curves.

Despite this extensive body of work, all explicit parametrizations of Mannheim curves found in the literature belong to the class of slant helices (or their subclasses). A natural question arises: \textit{Are there Mannheim curves in $\mathbb{R}^3$ that are neither helices nor slant helices, yet admit explicit parametrizations by elementary functions?}

In this paper, we answer this question affirmatively by constructing a new family of Mannheim curves using a systematic approach based on the angular coordinates of the tangent indicatrix. By making a specific choice for the auxiliary function $\delta(t)$, we obtain closed-form expressions for the coordinates of the curve, its Frenet vectors, curvatures, and the partner curve. The resulting curve is neither a helix nor a slant helix.

\section{Preliminaries}\label{sec:prelim}

Let $\mathbb{R}^3$ denote the Euclidean 3-space with the standard inner product 
$$\left\langle \cdot , \cdot\right\rangle = dx^2 + dy^2 + dz^2 \ ,$$ where $(x,y,z) \in \mathbb{R}^3$ is a rectangular coordinate system.

\subsection{Elementary curves}
A function is called \emph{elementary} if it can be obtained by elementary operations (addition, multiplication, division, composition, inversion, etc.) starting from a basic set of functions (polynomial, trigonometric, exponential, logarithmic, hyperbolic, etc.), excluding, for instance, elliptic and hypergeometric functions. Risch \cite{Risch1969} developed a method to decide whether a function admits an elementary integral; see also Geddes, Czapor, and Labahn \cite{Geddes1992} for a detailed treatment. A simple strategy to circumvent a negative result is to seek a new variable with respect to which the function becomes elementarily integrable.
However, when we deal with a parametrized curve described by two or more coordinates --- that is, a vector function whose coordinates depend on a single parameter --- there is no guarantee that there exists a single change of variable that makes the integrals of all coordinates elementary simultaneously.

\begin{example}$\displaystyle \int \sqrt{1+\cos^2(s)}ds$ is an elliptic function family. 
Consider $f(t)$ such that $\sqrt{1+\cos^2f(t)}.f'(t)=1$, then 

$$\displaystyle \int \sqrt{1+\cos^2(s)}ds = \displaystyle\int 1\,dt = t + \text{const}.$$
\end{example}

\begin{example}
Consider the curve $\alpha:\mathbb{R}\longrightarrow \mathbb{R}^3$ such that  
\begin{align*}
\dfrac{d\alpha}{ds}(s)&=\dfrac{1}{W(s)}\Bigl(-a\sin\left(t(s)\right),\ b\cos\left(t(s)\right),\ c\Bigr),\\
W(s)&=\sqrt{a^2\sin^2(t(s))+b^2\cos^2(t(s))+c^2},
\end{align*}
with $\alpha(0)=(a,0,0)$, and where $t(s)$ is the inverse of  
$$s(t)=\displaystyle \int_{0}^{t}\left(\sqrt{a^2\sin^2(*)+b^2\cos^2(*)+c^2}\right)\,d*.$$  
The coordinates of $\displaystyle \int \dfrac{d\alpha}{ds}(s)ds$ are not elementary. And we have   
$$\dfrac{d(\alpha\circ s)}{dt}(t)=\left(-a\sin(t),\,b\cos(t),\,c\right),$$  
hence  
$$(\alpha\circ s)(t)=\left(a\cos(t),\,b\sin(t),\,c\,t\right).$$
\end{example}

\subsection{General curves}

The \textit{prime} ``\ $\prime{}$\ ''  denotes derivative with respect to generic parameter $t$. And the \textit{dot} ``\ $\dot{}$\ ''  denotes derivative with respect to arc-length $s$. 

\begin{definition}
Let \(\alpha: I \subset \mathbb{R} \to \mathbb{R}^3\) be a regular curve. If \(\|\dot{\alpha}(s)\| = 1\) for all \(s \in I\), then \(s\) is called the \textit{arc length parameter} of \(\alpha\) and \(\alpha\) is said to be \textit{parameterized by arc length}.
\end{definition}

\begin{definition}
Let $\beta: J \subset \mathbb{R} \longrightarrow \mathbb{R}^{3}$, $\beta(t)$, with $t$ an arbitrary parameter, be a regular curve with non-zero curvature. The \textit{Frenet frame} (or Frenet–Serret apparatus) of $\beta$ consists of the following vector fields and scalar functions:
    \begin{enumerate}
    \item \textbf{Tangent:} 
    \begin{equation}\label{T x geral}
    T_{\beta}(t) = \dfrac{\beta'(t)}{\|\beta'(t)\|},    
    \end{equation}
    \item \textbf{Curvature:} 
    \begin{equation}\label{kappa x geral}
    \kappa_{\beta}(t) = \dfrac{\|\beta'(t) \times \beta''(t)\|}{\|\beta'(t)\|^3},    
    \end{equation}
    If $\kappa_\beta(t)\neq 0$, 
    
     \item \textbf{Principal normal:}
     \begin{equation}\label{N x geral}
     N_{\beta}(t) = \dfrac{T'_{\beta}(t)}{\|T'_{\beta}(t)\|}.  \end{equation}
     
    \item \textbf{Binormal:} 
    \begin{equation}\label{binormal x geral}
     B_{\beta}(t) = T_{\beta}(t) \times N_{\beta}(t),   
    \end{equation}

    \item \textbf{Torsion:} 
    \begin{equation}\label{tau x geral}
    \tau_{\beta}(t) = \dfrac{\det\bigl(\beta'(t),\,\beta''(t),\,\beta'''(t)\bigr)}{\|\beta'(t) \times   \beta''(t)\|^2}.    
    \end{equation}
\end{enumerate}
\end{definition}

\begin{definition}
Let $\beta: J \longrightarrow \mathbb{R}^3$, $\beta(t)$, be a regular curve and $t: I \longrightarrow J$, $t(v)$, a diffeomorphism. The curve $ \alpha= \beta \circ t: I \longrightarrow \mathbb{R}^3$, $ \alpha(v)= \beta (t(v))$, is called a \textit{reparametrization} of $\beta$ and $t(v)$ a \textit{change of parameter} of $\beta$. When $v=s$ is the arc length parameter of $\alpha$, $\alpha$ is called an \textit{arc length reparametrization} of $\beta$ and $t(s)$ is called a \textit{change of parameter by arc length} of $\beta$.
\end{definition}

\begin{proposition}Let $\alpha:I\longrightarrow \mathbb{R}^3$, $\alpha(v)=\beta(t(v))$, be a reparametrization of $\beta:J\longrightarrow \mathbb{R}^3$, $\beta(t)=\alpha(v(t))$, with $v=v(t)$ inverse of $t(v)$ and $\dfrac{dv}{dt}>0$. Then the Frenet apparatus of $\beta$ and $\alpha$ are related by:
\begin{enumerate}
    \item \textbf{Tangent at t:}
    \begin{equation}\label{tangente at t}
    T_{\beta}(t)=T_{\alpha}(v(t))
    \end{equation}
    \item \textbf{Curvature at t:}
    \begin{equation}\label{kappa at t}
    \kappa_{\beta}(t)=\kappa_{\alpha}(v(t))
    \end{equation}
    If $\kappa_\beta(t)\neq 0$,
    \item \textbf{Principal normal at t:}
    \begin{equation}\label{normal at t}
    N_{\beta}(t)=N_{\alpha}(v(t))
    \end{equation}
    \item \textbf{Binormal at t:}
    \begin{equation}\label{binormal at t}
    B_{\beta}(t)=B_{\alpha}(v(t))
    \end{equation}
    \item \textbf{Torsion at t:} 
    \begin{equation}\label{torsion at t}
    \tau_{\beta}(t)=\tau_{\alpha}(v(t))
    \end{equation}
\end{enumerate}
\end{proposition}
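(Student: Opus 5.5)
The plan is to write $\beta=\alpha\circ v$ and differentiate with the chain rule, tracking the factor $v'(t)>0$ throughout; each Frenet quantity of $\beta$ is then compared with the corresponding formula for $\alpha$ evaluated at $v(t)$. Here $T_\alpha$, $\kappa_\alpha$, etc.\ are computed from the same general-parameter formulas applied to $\alpha$ in its own parameter $v$, and $\frac{d}{dv}$ is written for derivatives of $\alpha$. First we will compute
\begin{align*}
\beta'(t)&=v'\,\frac{d\alpha}{dv},\\
\beta''(t)&=v''\,\frac{d\alpha}{dv}+(v')^2\,\frac{d^2\alpha}{dv^2},\\
\beta'''(t)&=v'''\,\frac{d\alpha}{dv}+3v'v''\,\frac{d^2\alpha}{dv^2}+(v')^3\,\frac{d^3\alpha}{dv^3},
\end{align*}
with all $\alpha$-derivatives evaluated at $v(t)$.

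For the tangent, divide $\beta'$ by its norm. Since $v'>0$, we have $\|\beta'\|=v'\|d\alpha/dv\|$, and the factor cancels to give $T_\beta(t)=T_\alpha(v(t))$. For the curvature, the cross product kills the $d\alpha/dv\times d\alpha/dv$ term, leaving
\[
\beta'\times\beta''=(v')^3\,\frac{d\alpha}{dv}\times\frac{d^2\alpha}{dv^2}.
\]
Dividing its norm by $\|\beta'\|^3=(v')^3\|d\alpha/dv\|^3$ gives $\kappa_\beta(t)=\kappa_\alpha(v(t))$.

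For the normal, differentiate the identity $T_\beta(t)=T_\alpha(v(t))$ just obtained. This gives $T_\beta'(t)=v'\,\frac{dT_\alpha}{dv}(v(t))$. Since $\kappa\neq0$, $T'_\beta\neq0$, and normalizing with $v'>0$ yields $N_\beta=N_\alpha\circ v$. The binormal then follows immediately from its definition as a cross product.

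For the torsion, expand the determinant $\det(\beta',\beta'',\beta''')$ by multilinearity. Every term containing a repeated column vanishes, so only the product of the leading coefficients survives:
\[
\det(\beta',\beta'',\beta''')=(v')\,(v')^2\,(v')^3\,\det\Bigl(\frac{d\alpha}{dv},\frac{d^2\alpha}{dv^2},\frac{d^3\alpha}{dv^3}\Bigr)=(v')^6\det(\cdots).
\]
This matches $\|\beta'\times\beta''\|^2=(v')^6\|\cdots\|^2$, so $\tau_\beta=\tau_\alpha\circ v$.

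The only delicate point is this determinant bookkeeping: one must check that the $v''$ and $v'''$ terms in $\beta''$ and $\beta'''$ each produce a repeated column and hence drop out. One must also be careful that the sign of $v'$ is used for $T$ and $N$. Torsion and curvature are insensitive to that sign, since $v'$ appears to an even power there.
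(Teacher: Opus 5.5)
Your proof is correct and complete under the stated hypotheses. The paper gives no proof of this proposition; it is quoted as a standard fact. Your chain-rule computation is exactly the routine verification being taken for granted, in particular the multilinearity argument showing that the $v''$ and $v'''$ contributions drop out of $\beta'\times\beta''$ and of $\det(\beta',\beta'',\beta''')$.

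One small correction to your closing remark, which does not affect the argument since $v'>0$ is assumed:
\begin{itemize}
\item Curvature is insensitive to the sign of $v'$ because of the norms, which give $|v'|^3/|v'|^3$, not because $v'$ appears to an even power.
\item $N$ is also sign-insensitive. For $v'<0$ one gets $T_\beta=-T_\alpha\circ v$, hence $T_\beta'=|v'|\,(dT_\alpha/dv)\circ v$ and $N_\beta=N_\alpha\circ v$.
\end{itemize}
So only $T$ and $B$ genuinely flip under a change of orientation.
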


\begin{definition}
Let $\gamma:I\to S^{2}$, $\gamma(t)$, be a regular curve on the unit sphere. Its \emph{geodesic curvature} (with respect to the orientation $N(p)=p$) in $S^{2}$ is
\begin{equation}\label{geodesic curvature in S2}
\kappa^{\smash[b]{\gamma}}_{g}(t)=\frac{\det(\gamma(t),\gamma'(t),\gamma''(t))}{\|\gamma'(t)\|^{3}} .
\end{equation}
\end{definition}

\begin{remark}
The definition above is a particular case of the general definition of geodesic curvature: for a regular curve $\gamma$ on an oriented surface $S\subset\mathbb R^{3}$ with unit normal $N$,
\begin{equation}\label{geodesic curvature general}
\kappa^{\smash[b]{\gamma,S}}_{g}(t)=\frac{\langle N(\gamma(t))\times\gamma'(t),\,\gamma''(t)\rangle}{\|\gamma'(t)\|^{3}}.    
\end{equation}

Taking $S=S^{2}$ and $N(p)=p$ one obtains the definition above.
\end{remark}

\begin{proposition}
Let $\beta:I\to\mathbb R^{3}$ be a regular curve, $\beta(t)$,  with $\beta'\times\beta''\neq0$. The geodesic curvature, in $S^{2}$, of its tangent indicatrix $T_{\beta}(t)$ is

\begin{equation}\label{geodesic curvature and quotient}
\kappa^{\smash[b]{T_{\beta}}}_{g}(t)=\frac{\tau_\beta(t)}{\kappa_\beta(t)}.    
\end{equation}
\end{proposition}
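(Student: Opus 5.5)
The plan is to reduce to arc length and then compute directly with the Frenet--Serret equations. Both sides of \eqref{geodesic curvature and quotient} are invariant under orientation-preserving reparametrization. For the right-hand side this is the earlier Proposition, identities \eqref{kappa at t} and \eqref{torsion at t}. For the left-hand side, the expression \eqref{geodesic curvature in S2} for a curve $\gamma$ on $S^2$ is unchanged under $\gamma(t)=\tilde\gamma(v(t))$ with $v'>0$. Indeed,
\begin{equation*}
\gamma'=v'\tilde\gamma', \qquad \gamma''=v''\tilde\gamma'+(v')^2\tilde\gamma'' .
\end{equation*}
The $\tilde\gamma'$-term in $\gamma''$ drops out of the determinant, the numerator picks up $(v')^3$, and the denominator picks up $|v'|^3=(v')^3$. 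By \eqref{tangente at t} we have $T_\beta(t)=T_\alpha(v(t))$, so it suffices to prove the identity for the arc length reparametrization $\alpha(s)$ of $\beta$. The hypothesis $\beta'\times\beta''\neq0$ guarantees $\kappa>0$, so the Frenet frame exists.

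With $s$ the arc length and $T=T_\alpha$, the Frenet--Serret equations give the following derivatives of the spherical curve $\gamma=T$:
\begin{align*}
\gamma &= T,\\
\dot\gamma &= \kappa N,\\
\ddot\gamma &= \dot\kappa N+\kappa(-\kappa T+\tau B).
\end{align*}
In particular $\|\dot\gamma\|=\kappa$, and $\dot\gamma\neq0$, so $\gamma$ is regular. Substituting into the determinant and using multilinearity, we get
\begin{equation*}
\det(T,\kappa N,\dot\kappa N-\kappa^2T+\kappa\tau B)=\kappa^2\tau\det(T,N,B)=\kappa^2\tau ,
\end{equation*}
since $(T,N,B)$ is positively oriented by \eqref{binormal x geral}. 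Dividing by $\|\dot\gamma\|^3=\kappa^3$ yields $\tau/\kappa$, as claimed.

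There is no real obstacle. The only points needing care are the reparametrization invariance of the geodesic curvature, which requires $v'>0$ so that $|v'|^3=(v')^3$, and the orientation convention $B=T\times N$, which gives $\det(T,N,B)=+1$.
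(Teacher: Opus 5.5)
Your proof is correct and complete. The paper states this proposition in the preliminaries without any proof, treating it as a standard fact. It is used only later, in Section~\ref{sec:angular_mannheim}, to interpret $F=\tau_\beta/\kappa_\beta$ as the geodesic curvature of the tangent indicatrix. So there is no argument in the paper to compare against. Your reduction to arc length followed by the computation $\det(T,\kappa N,\dot\kappa N-\kappa^2T+\kappa\tau B)=\kappa^2\tau$ is the standard proof. You handle the sign conventions correctly: $B=T\times N$, $\dot B=-\tau N$, and the orientation $N(p)=p$ on $S^2$.

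Two small remarks follow.

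First, the reparametrization step can be avoided by working at the generic parameter directly. Write $v=\|\beta'\|$. Then $T_\beta'=v\kappa_\beta N_\beta$ and $T_\beta''=(v\kappa_\beta)'N_\beta+v^2\kappa_\beta(-\kappa_\beta T_\beta+\tau_\beta B_\beta)$. Hence $\det(T_\beta,T_\beta',T_\beta'')=v^3\kappa_\beta^2\tau_\beta$ and $\|T_\beta'\|^3=v^3\kappa_\beta^3$. Your invariance lemma is essentially this same computation done once in the abstract.

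Second, within the paper's framework the other natural route would be to compute $\det(T_\beta,T_\beta',T_\beta'')$ from the spherical-coordinate expression \eqref{eq:T_x} and compare with \eqref{eq:kappa_x} and \eqref{eq:tau_x}. That route is messier, and it is only local: it needs $T_\beta\neq\pm e_3$, and \eqref{eq:tau_x} needs $\Theta'\neq0$. Your coordinate-free Frenet argument has no such restrictions.
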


\subsection{Helix and Slant Helix}
\begin{definition}[Lancret \cite{Lancret1802}]
A regular curve $\alpha: I \subset \mathbb{R} \to \mathbb{R}^{3}$ with non-zero curvature is a \emph{generalized helix} if its tangent $T_{\alpha}$ makes a constant angle with a fixed direction in space.
\end{definition}

\begin{definition} [Izumiya and Takeuchi \cite{IzumiyaTakeuchi2004}]
A regular curve $\alpha: I \subset \mathbb{R} \to \mathbb{R}^{3}$ with non-zero curvature is a \emph{slant helix} if its principal normal $N_{\alpha}$ makes a constant angle with a fixed direction in space.
\end{definition}

\begin{theorem}[Lancret \cite{Lancret1802} / Saint-Venant \cite{SaintVenant1845}]\label{helix characterization}
A curve $\alpha$ with curvature $\kappa_{\alpha} \neq 0$ is a generalized helix if and only if
\begin{equation}\label{generalhelix_equation}
\sigma_{0}(t) =\frac{\tau_{\alpha}}{\kappa_{\alpha}}= \text{constant}.
\end{equation}
\end{theorem}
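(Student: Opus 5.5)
We prove the two directions of Theorem~\ref{helix characterization} with the Frenet--Serret equations. By the reparametrization proposition, $T$, $N$, $B$, $\kappa$ and $\tau$ are invariant under orientation-preserving changes of parameter. So we may assume that $\alpha$ is parametrized by arc length $s$, with
$$\dot T=\kappa N,\qquad \dot N=-\kappa T+\tau B,\qquad \dot B=-\tau N.$$

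\emph{Necessity.} Suppose $\langle T,u\rangle=\cos\theta$ is constant for some fixed unit vector $u$. Differentiating gives $\kappa\langle N,u\rangle=0$, and since $\kappa\neq0$ we get $\langle N,u\rangle\equiv0$. Hence $u$ lies in the span of $T$ and $B$, so $u=\cos\theta\,T+\sin\theta\,B$, and $\sin\theta$ is constant because $\cos\theta$ is. Differentiating $\langle N,u\rangle=0$ once more gives
$$\langle -\kappa T+\tau B,u\rangle=-\kappa\cos\theta+\tau\sin\theta=0.$$
The case $\sin\theta=0$ is impossible: then $u=\pm T$, so $T$ would be constant and $\kappa=0$. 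Therefore $\tau/\kappa=\cot\theta$ is constant.

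\emph{Sufficiency.} Suppose $\tau/\kappa=c$ is constant. Choose $\theta\in(0,\pi)$ with $\cot\theta=c$ and set
$$u=\cos\theta\,T+\sin\theta\,B .$$
Then $\dot u=(\kappa\cos\theta-\tau\sin\theta)N=\sin\theta\,(\kappa\cot\theta-\tau)N=0$, so $u$ is a fixed unit vector. Since $\langle T,u\rangle=\cos\theta$ is constant, $\alpha$ is a generalized helix.

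\emph{Alternative route.} One could instead use the proposition $\kappa_g^{T}=\tau/\kappa$: constant $\tau/\kappa$ means the tangent indicatrix has constant geodesic curvature on $S^2$, hence is contained in a circle, which is the constant-angle condition. That route, however, needs the classification of constant-geodesic-curvature curves on the sphere, so the direct Frenet computation above is cleaner.

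The only delicate point is the degenerate case $\sin\theta=0$ in the necessity direction, which is excluded by $\kappa\neq0$. There is also the caveat that the statement's $\sigma_0(t)$ is written with a general parameter $t$. This is handled by the invariance proposition, since $\tau/\kappa$ is the same function of the point whichever parameter is used.
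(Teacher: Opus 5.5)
The paper gives no proof of this statement. It is quoted from Lancret and Saint-Venant and then used, for instance in Corollary~\ref{cor:helix_slant}(1) (``equivalently, by Lancret's theorem'') and to certify that the $\textbf{a}=2$ example is not a helix. Your Frenet-frame argument is the standard proof and is correct in both directions, including the planar case $c=0$, where your $u$ is simply $B$.

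The closest material in the paper is the spherical-coordinate argument of Corollary~\ref{cor:helix_slant}(1). It rotates the fixed direction to $e_3$, so that the latitude of $T_\beta$ is constant, $\Phi\equiv\Phi_0$. It then reads $\tau_\beta/\kappa_\beta=\operatorname{sgn}(\Theta')\tan\Phi_0$ off \eqref{eq:kappa_x} and \eqref{eq:tau_x} with $\Phi'=0$. That computation gives only your necessity direction, because the coordinates presuppose that the axis is already known. Your sufficiency step builds the axis $u=\cos\theta\,T+\sin\theta\,B$ and checks $\dot u=0$, which is exactly what the paper leaves to the citation. In exchange, the coordinate computation makes the relation between the cone angle and the ratio explicit, including its sign.

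That sign is the one small imprecision in your necessity step. From $\langle N,u\rangle=0$ you only get $u=\cos\theta\,T+\mu B$ with $\mu^2=\sin^2\theta$. The phrase ``$\sin\theta$ is constant because $\cos\theta$ is'' therefore only shows that $\mu^2$ is constant. The clean justification is $\frac{d}{ds}\langle B,u\rangle=-\tau\langle N,u\rangle=0$. So $\mu$ is constant and $\tau/\kappa=\cos\theta/\mu$, which, with $\theta\in[0,\pi]$, can equal $-\cot\theta$. The paper's factor $\operatorname{sgn}(\Theta')$ is precisely $\operatorname{sgn}\mu$, since \eqref{eq:B_x} with $\Phi'=0$ gives $\langle B_\beta,e_3\rangle=\operatorname{sgn}(\Theta')\cos\Phi_0$. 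None of this affects your conclusion that $\tau/\kappa$ is constant.
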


\begin{theorem}[Izumiya and Takeuchi \cite{IzumiyaTakeuchi2004}]{(Slant helix characterization)}
\label{slant helix characterization}
A curve $\alpha$ is a slant helix if and only if
\begin{equation}\label{slanthelix_equation}
\sigma_{1}(s) =\frac{{\kappa_{\alpha}}^2}{({\kappa_{\alpha}}^2+{\tau_{\alpha}}^2)^{3/2}} \left( \frac{\tau_{\alpha}}{\kappa_{\alpha}} \right)^{\!\cdot} = \text{constant}.
\end{equation}
\end{theorem}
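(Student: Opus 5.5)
The plan is to prove the Izumiya--Takeuchi characterization by working with the Darboux vector of $\alpha$. Parametrize $\alpha$ by arc length $s$; reparametrization does not change the Frenet apparatus (Proposition above), so this costs nothing. Assume $\kappa^2+\tau^2$ never vanishes, which holds automatically since $\kappa>0$. Introduce the unit Darboux vector
\[
D=\frac{\tau T+\kappa B}{\sqrt{\kappa^2+\tau^2}} .
\]
By Frenet--Serret, $\dot N=-\kappa T+\tau B$, so $\dot N$ is orthogonal to both $N$ and $D$. The triple $(N,\ \dot N/\|\dot N\|,\ D)$ is therefore an orthonormal frame adapted to the spherical curve $N(s)$; up to sign, $D$ plays the role of the cross product $N\times \dot N/\|\dot N\|$. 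Write $\kappa=r\cos\theta$ and $\tau=r\sin\theta$ with $r=\sqrt{\kappa^2+\tau^2}$. Then $D=\sin\theta\,T+\cos\theta\,B$ and $\tan\theta=\tau/\kappa$.

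Next I would differentiate to get
\[
\dot D=\dot\theta\,(\cos\theta\,T-\sin\theta\,B),
\qquad
\dot\theta=\frac{\kappa^2}{\kappa^2+\tau^2}\left(\frac{\tau}{\kappa}\right)^{\!\cdot}.
\]
Since $\cos\theta T-\sin\theta B=-\dot N/r$, this gives $\dot D=-(\dot\theta/r)\,\dot N/\|\dot N\|$. The quantity $\dot\theta/r$ is exactly $\sigma_1$ in \eqref{slanthelix_equation}, and up to sign it is the geodesic curvature of the normal indicatrix $N(s)$ on $S^2$. This step is the analogue, for $N$, of the proposition giving the geodesic curvature of $T$ as $\tau/\kappa$ in \eqref{geodesic curvature and quotient}.

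For the direction (slant helix $\Rightarrow$ $\sigma_1$ constant), suppose $\langle N,u\rangle=\cos\phi$ is constant for a unit vector $u$. Differentiating gives $\langle -\kappa T+\tau B,u\rangle=0$, so $u$ lies in the span of $N$ and $D$, and I write $u=\cos\phi\,N+c\,D$ with $c=\pm\sin\phi$ constant. Differentiating once more gives
\[
0=\cos\phi\,\dot N+c\,\dot D=\left(r\cos\phi-c\,\sigma_1 r\right)\frac{\dot N}{r}.
\]
Hence $\sigma_1=\cos\phi/c=\pm\cot\phi$ is constant. The case $c=0$ forces $\cos\phi\,\dot N=0$, which is impossible because $\kappa>0$ makes $\dot N\neq0$.

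For the converse, suppose $\sigma_1=\lambda$ is constant. Define $u=\lambda N+D$. Then $\dot u=\lambda\dot N+\dot D=(\lambda-\sigma_1)\dot N=0$, so $u$ is a constant vector. Moreover $\langle N,u\rangle/\|u\|=\lambda/\sqrt{1+\lambda^2}$ is constant, so $N$ makes a constant angle with the fixed direction $u/\|u\|$.

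The main obstacle is the bookkeeping in the second step: computing $\dot\theta$ correctly, and checking that the prefactor $\kappa^2/(\kappa^2+\tau^2)^{3/2}$ appears exactly as in \eqref{slanthelix_equation}. I also need the signs to be consistent, so that $\dot D=-\sigma_1\dot N$ rather than $+\sigma_1\dot N$. Either sign convention works, provided the vector $u$ in the converse is chosen to match it.
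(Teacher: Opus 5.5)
The paper gives no proof of this theorem. It is quoted from Izumiya and Takeuchi and used only as background, so there is no internal argument to match. Your proof is correct and is the standard Darboux-vector argument. Take $\theta=\arctan(\tau/\kappa)$, which is a global smooth choice because $\kappa>0$. Then $\dot D=\dot\theta(\cos\theta\,T-\sin\theta\,B)$ and $\dot N=-r(\cos\theta\,T-\sin\theta\,B)$, so $\dot D=-\sigma_1\dot N$, and both directions go through as you wrote them.

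There is one slip. Your display $\dot D=-(\dot\theta/r)\,\dot N/\|\dot N\|$ is off by a factor $r=\|\dot N\|$. The correct identity is $\dot D=-\dot\theta\,\dot N/\|\dot N\|=-(\dot\theta/r)\,\dot N=-\sigma_1\dot N$. That is the form you actually use afterwards, in $(r\cos\phi-c\sigma_1 r)\dot N/r$ and in $\dot u=(\lambda-\sigma_1)\dot N$, so nothing downstream is affected. In the forward direction, you can also get constancy of $c=\langle u,D\rangle$ directly from $\langle u,\dot D\rangle=-\sigma_1\langle u,\dot N\rangle=0$, instead of arguing by continuity.

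Your geodesic-curvature remark holds exactly, not just up to sign. With the orientation $N(p)=p$ used in the paper, $\det(N,\dot N,\ddot N)/\|\dot N\|^3=(\kappa\dot\tau-\tau\dot\kappa)/(\kappa^2+\tau^2)^{3/2}=\sigma_1$. So the theorem is the normal-indicatrix analogue of the paper's identity $\kappa_g=\tau/\kappa$ for the tangent indicatrix: a curve is a slant helix if and only if its normal indicatrix is a circle on $S^2$.

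The closest argument the paper makes itself is part~(2) of its corollary on helices and slant helices. There the axis is assumed known and rotated to $e_3$, which turns the condition into $\Theta'^2=(m^2-\sec^2\Phi)\Phi'^2$. Your intrinsic criterion needs no knowledge of the axis. For instance, it turns the paper's unproved assertion that the $\mathbf{a}=2$ example is not a slant helix into a direct check. There $\sigma_1=F'(t)/\bigl(\sqrt{1+4\cos^2 t}\,(1+F^2)^{3/2}\bigr)$, which vanishes at $t=\pi/2$ but tends to $12/25$ as $t\to0^+$.
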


\subsection{Frenet equations in \texorpdfstring{$\mathbb{R}^3$}{R3}}\label{frenet equations R3}

\begin{theorem}[Frenet compatibility equations \cite{Oneill}]
Let $\alpha: I \to \mathbb{R}^3$ be a curve parametrized by arc-length $s$, with curvature $\kappa_\alpha>0$ and torsion $\tau_\alpha$. Then its Frenet frame $\{T_\alpha,N_\alpha,B_\alpha\}$ satisfies the Frenet equations
\begin{equation}\label{eq:frenet}
\begin{aligned}
\dot{T}_\alpha &= \kappa_\alpha N_\alpha,\\
\dot{N}_\alpha &= -\kappa_\alpha T_\alpha + \tau_\alpha B_\alpha,\\
\dot{B}_\alpha &= -\tau_\alpha N_\alpha,
\end{aligned}
\end{equation}
Conversely, given continuous functions $\kappa>0$ and $\tau$ on $I$, an initial point $p_0 \in \mathbb{R}^3$ and an initial orthonormal right-handed frame $\{T_0,N_0,B_0\}$ at a point $s_0 \in I$, there exists a unique curve $\alpha: I \to \mathbb{R}^3$ parametrized by arc-length, whose curvature and torsion are $\kappa$ and $\tau$, and whose Frenet frame at $s_0$ is $\{T_0,N_0,B_0\}$ with $\alpha(s_0)=p_0$.

Thus, the Frenet equations are the \textbf{compatibility equations} for the Frenet frame: an orthonormal moving frame $\{T,N,B\}$ with associated scalar functions $\kappa>0$ and $\tau$ is the Frenet frame of an arc-length parametrized curve if and only if it satisfies \eqref{eq:frenet} and its first vector is the derivative of the curve.
\end{theorem}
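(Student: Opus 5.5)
The plan is to prove the two directions of the Frenet compatibility statement separately, working with the arc-length parameter $s$ throughout.

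\textbf{Direct part.} Let $\alpha$ be parametrized by arc length with $\kappa_\alpha>0$. Since $\|\dot\alpha\|=1$, we have $T_\alpha=\dot\alpha$. Differentiating $\langle T_\alpha,T_\alpha\rangle=1$ gives $\dot T_\alpha\perp T_\alpha$. The definitions (\ref{kappa x geral}) and (\ref{N x geral}), specialized to $\|\alpha'\|=1$, give $\kappa_\alpha=\|\dot T_\alpha\|$ and $\dot T_\alpha=\kappa_\alpha N_\alpha$, which is the first equation.

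Since $\{T_\alpha,N_\alpha,B_\alpha\}$ is orthonormal, the matrix of coefficients of the derivatives of the frame in the frame is skew-symmetric. This follows by differentiating $\langle e_i,e_j\rangle=\delta_{ij}$. Consequently $\dot N_\alpha=-\kappa_\alpha T_\alpha+c\,B_\alpha$ and $\dot B_\alpha=-c\,N_\alpha$ for some function $c$.

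It remains to identify $c$ with $\tau_\alpha$ from (\ref{tau x geral}). Write $\dot\alpha=T$, $\ddot\alpha=\kappa N$ and $\dddot\alpha=\dot\kappa N+\kappa(-\kappa T+cB)$. Then $\det(\dot\alpha,\ddot\alpha,\dddot\alpha)=\kappa^2 c$ and $\|\dot\alpha\times\ddot\alpha\|^2=\kappa^2$, so $c=\tau_\alpha$.

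\textbf{Converse.} Regard the Frenet equations as a linear system $\dot X=X\,A(s)$ for the $3\times 3$ matrix $X=(T\,|\,N\,|\,B)$ with continuous skew-symmetric $A$. Standard linear ODE theory gives a unique solution on all of $I$ with $X(s_0)=(T_0|N_0|B_0)$.

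Orthonormality is preserved. Indeed, $\frac{d}{ds}(X^{t}X)=A^{t}X^{t}X+X^{t}XA$, and $Y\equiv \mathrm{Id}$ solves this linear equation with the same initial value, so by uniqueness $X^tX\equiv \mathrm{Id}$. Moreover, $\det X$ is continuous with values $\pm1$ and equals $1$ at $s_0$, so the frame stays right-handed.

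Define $\alpha(s)=p_0+\int_{s_0}^s T$. Then $\dot\alpha=T$ is unit, so $s$ is arc length. Also $\dot T=\kappa N$ with $\kappa>0$ and $N$ a unit vector, so $\kappa_\alpha=\kappa$ and $N_\alpha=N$. Right-handedness gives $B_\alpha=T\times N=B$, and the computation from the direct part yields $\tau_\alpha=\tau$.

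For uniqueness, any other such curve has a Frenet frame solving the same initial value problem, hence the same frame, hence the same $T$ and the same curve after integration.

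The final sentence of the statement, about compatibility equations, is then a restatement of the two directions.

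\textbf{Main obstacle.} The step needing the most care is preservation of orthonormality and orientation in the converse. I would handle it via uniqueness for the linear ODE satisfied by $X^tX$, as described above, together with the continuity argument for $\det X$.
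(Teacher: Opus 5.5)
The paper gives no proof of this statement; it is quoted from O'Neill. Your argument is the standard textbook proof: skew-symmetry of the coefficient matrix for the direct part, and for the converse the linear system $\dot X=XA$ with global existence and uniqueness, invariance of $X^{t}X$, and continuity of $\det X$. So there is no alternative route to compare, and the structure is sound.

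One step does not hold under the stated hypotheses: the identification $\tau_\alpha=\tau$ in the converse. There $\kappa$ is only assumed continuous, so $\ddot\alpha=\kappa N$ is merely continuous and $\alpha$ is only $C^{2}$. Then $\dddot\alpha=\dot\kappa N+\kappa\dot N$ need not exist, and neither does the determinant in \eqref{tau x geral}, so ``the computation from the direct part'' is not available. There are two ways to repair it:
\begin{enumerate}
\item Strengthen the hypothesis to $\kappa\in C^{1}$. Your computation then goes through verbatim.
\item Take torsion in the frame sense, $\tau_\alpha=-\langle\dot B_\alpha,N_\alpha\rangle$, which is how the paper itself computes it in the proof of Proposition~\ref{thm:arbitrary_param}. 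Since $N_\alpha=N$ and $B_\alpha=B$ are $C^{1}$ as solutions of the system, $\dot B=-\tau N$ gives $\tau_\alpha=\tau$ at once.
\end{enumerate}

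The same remark applies to your uniqueness step. For a competing curve, the skew-symmetry argument alone gives $c=\langle\dot N,B\rangle=\tau$, with no third derivative needed. Hence its frame solves the same initial value problem, and uniqueness follows as you describe.
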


\subsection{Frenet apparatus in spherical coordinates with respect to a generic parameter}\label{sec:angular}

This subsection details the complete formulation of the Frenet frame for a space curve expressed in spherical coordinates, starting from its arc-length reparametrization and then considering a general parameter $t$.

Let $\alpha: I \subset \mathbb{R} \to \mathbb{R}^3$ be a smooth unit-speed curve with arc-length parameter $s$. Since its unit tangent vector field $T_\alpha(s) = \dot{\alpha}(s)$ inherently satisfies $\Vert{}T_\alpha(s)\Vert{} = 1$ for every $s \in I$, its trajectory is naturally constrained to the unit sphere $\mathbb{S}^2$. Consequently, we can seamlessly parametrize $T_\alpha(s)$ in terms of two spherical angular functions as follows:

\begin{equation}\label{Talpha}
T_{\alpha}(s)=(\cos\theta(s)\cos\phi(s),\ \sin\theta(s)\cos\phi(s),\ \sin\phi(s)),
\end{equation}
where $\theta(s)\in\mathbb{R}$ and $\phi(s) \in \left(-\dfrac{\pi}{2},\dfrac{\pi}{2}\right)$ are differentiable real-valued functions. 

\begin{proposition}\label{thm:arbitrary_param}
Let $\beta(t) = \alpha(s(t))$ be a smooth, orientation-preserving reparametrization of $\alpha$ satisfying $\dfrac{ds}{dt}>0$. In order to express the Frenet apparatus explicitly in terms of the general parameter $t$, we define, for convenience, the composite angular functions as
\begin{equation}\label{eq:ThetaPhi}
\Theta(t) = \theta(s(t)), \qquad \Phi(t) = \phi(s(t)).
\end{equation}
Throughout the remainder of the text a prime ``\ $'$\ '' denotes differentiation with respect to $t$. The Frenet apparatus of $\beta$ is given by:
\begin{equation}\label{eq:deriv_comp}
\beta'(t) = \frac{ds}{dt}\cdot \big( \cos\Theta\cos\Phi,\; \sin\Theta\cos\Phi,\; \sin\Phi \big),
\end{equation}
\begin{align}
T_\beta(t) &= \big( \cos\Theta\cos\Phi,\; \sin\Theta\cos\Phi,\; \sin\Phi \big), \label{eq:T_x}\\
N_\beta(t) &= \frac{\big( -\sin\Theta\cos\Phi\,\Theta' - \cos\Theta\sin\Phi\,\Phi',\; \cos\Theta\cos\Phi\,\Theta' - \sin\Theta\sin\Phi\,\Phi',\; \cos\Phi\,\Phi' \big)}{\sqrt{\Theta'^2\cos^2\Phi + \Phi'^2}}, \label{eq:N_x}\\
B_\beta(t) &= \frac{\big( \sin\Theta\,\Phi' - \cos\Theta\sin\Phi\cos\Phi\,\Theta',\; -\cos\Theta\,\Phi' - \sin\Theta\sin\Phi\cos\Phi\,\Theta',\; \cos^2\Phi\,\Theta' \big)}{\sqrt{\Theta'^2\cos^2\Phi + \Phi'^2}}, \label{eq:B_x}\\
\kappa_\beta(t) &= \frac{1}{\dfrac{ds}{dt}} \sqrt{\Theta'^2\cos^2\Phi + \Phi'^2}, \label{eq:kappa_x}\\
\tau_\beta(t) &= \frac{1}{\dfrac{ds}{dt}} \left\{ \Theta'\sin\Phi+ \left[ \arctan\left( \frac{\Phi^{\prime}}{\Theta^{\prime}\cos\Phi} \right) \right]^{{\prime}} \right\}, \label{eq:tau_x}
 \end{align}
where \eqref{eq:tau_x} is valid where $\Theta'\neq0$.
\end{proposition}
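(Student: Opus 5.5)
The plan is a direct computation of each item from the general-parameter definitions \eqref{T x geral}--\eqref{tau x geral}, using the invariance of the Frenet apparatus under orientation-preserving reparametrization. Set $u(t)=(\cos\Theta\cos\Phi,\sin\Theta\cos\Phi,\sin\Phi)$. By the chain rule and \eqref{Talpha}, $\beta'(t)=\frac{ds}{dt}\,\dot\alpha(s(t))=\frac{ds}{dt}\,u(t)$, which is \eqref{eq:deriv_comp}. Since $\|u\|=1$ and $\frac{ds}{dt}>0$, \eqref{T x geral} gives \eqref{eq:T_x}.

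Next I differentiate $u$ in $t$. It is convenient to introduce the orthonormal frame
\[
u,\qquad e_\theta=(-\sin\Theta,\cos\Theta,0),\qquad e_\phi=(-\cos\Theta\sin\Phi,-\sin\Theta\sin\Phi,\cos\Phi),
\]
for which one checks $u\times e_\theta=e_\phi$ and $u\times e_\phi=-e_\theta$. A direct differentiation gives $T_\beta'=u'=\Theta'\cos\Phi\,e_\theta+\Phi'\,e_\phi$, hence $\|T_\beta'\|=\sqrt{\Theta'^2\cos^2\Phi+\Phi'^2}=:W$. Then \eqref{N x geral} yields $N_\beta=(\Theta'\cos\Phi\,e_\theta+\Phi' e_\phi)/W$, which expands to \eqref{eq:N_x}. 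For the binormal, $B_\beta=u\times N_\beta=(\Theta'\cos\Phi\,e_\phi-\Phi' e_\theta)/W$, which expands componentwise to \eqref{eq:B_x}. For the curvature, $\dot T_\alpha=\kappa_\alpha N_\alpha$ and the chain rule give $T_\beta'=\frac{ds}{dt}\kappa_\alpha N_\alpha$. Taking norms and using \eqref{kappa at t} gives \eqref{eq:kappa_x}.

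The main obstacle is the torsion, where I avoid the triple-product formula \eqref{tau x geral} and instead use $B_\beta'=-\frac{ds}{dt}\tau_\beta N_\beta$, which follows from the Frenet equations \eqref{eq:frenet} and the invariance relations. Where $\Theta'\neq0$, I write $\Theta'\cos\Phi=W\cos\psi$ and $\Phi'=W\sin\psi$, so that $\psi=\arctan\!\big(\Phi'/(\Theta'\cos\Phi)\big)$ (up to a locally constant multiple of $\pi$, which does not affect $\psi'$). Then
\[
N_\beta=\cos\psi\,e_\theta+\sin\psi\,e_\phi,\qquad B_\beta=\cos\psi\,e_\phi-\sin\psi\,e_\theta .
\]
Differentiating the frame gives $e_\theta'=-\Theta'(\cos\Phi\,u-\sin\Phi\,e_\phi)$ and $e_\phi'=-\Phi' u-\Theta'\sin\Phi\,e_\theta$.

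Substituting these derivatives, the $u$-components of $B_\beta'$ cancel: they are $-\Phi'\cos\psi+\Theta'\cos\Phi\sin\psi=0$ by the definition of $\psi$. What remains is $B_\beta'=-(\psi'+\Theta'\sin\Phi)\,N_\beta$. Comparing this with $B_\beta'=-\frac{ds}{dt}\tau_\beta N_\beta$ gives \eqref{eq:tau_x}. As a check, dividing by \eqref{eq:kappa_x} reproduces the geodesic curvature relation \eqref{geodesic curvature and quotient} for the tangent indicatrix $u$. The only delicate point is the sign and branch bookkeeping in $\psi$, which is why the formula is restricted to the set where $\Theta'\neq0$.
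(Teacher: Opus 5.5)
Your proof is correct, but it is organized differently from the paper's.

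\textbf{The paper's route.} It first works at arc length. It writes $\dot T_\alpha$ in terms of $\theta(s),\phi(s)$ and states $\kappa_\alpha$, $N_\alpha$, $B_\alpha$ ``by simple calculations''. It gets $\tau_\alpha=\dot\theta\sin\phi+\dot\psi$ from $\tau_\alpha=-\langle\dot B_\alpha,N_\alpha\rangle$, again without displaying the computation. Only then does it pass to $t$ via $\Theta'=\dot\theta\,\frac{ds}{dt}$ and $\Phi'=\dot\phi\,\frac{ds}{dt}$. It observes that the factor $\frac{ds}{dt}$ cancels inside the $\arctan$ and reappears when that $\arctan$ is differentiated in $t$.

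\textbf{Your route.} You compute directly in $t$ with the adapted orthonormal frame $(u,e_\theta,e_\phi)$. The arc-length picture enters only through the invariance relations $\kappa_\beta=\kappa_\alpha\circ s$ and $B_\beta=B_\alpha\circ s$, and the resulting equation $B_\beta'=-\frac{ds}{dt}\tau_\beta N_\beta$.

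\textbf{What your route buys.} The computation the paper omits is actually carried out, and it becomes transparent. $N_\beta$ and $B_\beta$ are rotations by $\psi$ in the $(e_\theta,e_\phi)$-plane, and the vanishing of the $u$-component of $B_\beta'$ is exactly the defining relation of $\psi$. You also make explicit that $\psi$ and $\arctan\bigl(\Phi'/(\Theta'\cos\Phi)\bigr)$ may differ by a locally constant multiple of $\pi$ (an odd one where $\Theta'<0$). The paper does not discuss this; it is harmless, since only $\psi'$ enters.

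\textbf{What the paper's route buys.} It is more modular. The arc-length spherical-coordinate formulas are standard, and the general-parameter versions follow by mechanical substitution. It also shows directly why the quotient inside the $\arctan$ does not depend on the parametrization.
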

\begin{proof} Let $\alpha(s)$ be a curve parametrized by arc-length. Denoting its tangent vector by \eqref{Talpha}, we obtain
$$
\dot{T}_{\alpha} 
= \bigl( -\dot{\theta}\sin\theta\cos\phi - \dot{\phi}\cos\theta\sin\phi,\; \dot{\theta}\cos\theta\cos\phi - \dot{\phi}\sin\theta\sin\phi,\; \dot{\phi}\cos\phi \bigr).
$$

Thus, by definition and simple calculations, we obtain the curvature, the normal vector and the binormal vector given by the equations:

   \begin{equation}\label{curvaturaalpha}
   \kappa_{\alpha}(s) = \sqrt{(\dot{\theta})^2\cos^2\phi + (\dot{\phi})^2},    
   \end{equation}

   \begin{equation}\label{normalalpha}
   N_{\alpha}(s) = \dfrac{1}{\sqrt{(\dot{\theta})^2\cos^2\phi + (\dot{\phi})^2}} \Big( -\dot{\theta}\sin\theta\cos\phi - \dot{\phi}\cos\theta\sin\phi,\; \dot{\theta}\cos\theta\cos\phi - \dot{\phi}\sin\theta\sin\phi,\; \dot{\phi}\cos\phi \Big),
   \end{equation} 

   \begin{equation}\label{binormalalpha}
   B_{\alpha}(s) = \dfrac{1}{\sqrt{(\dot{\theta})^2\cos^2\phi + (\dot{\phi})^2}} \Big( \dot{\phi}\sin\theta - \dot{\theta}\cos\theta\sin\phi\cos\phi,\; -\dot{\phi}\cos\theta - \dot{\theta}\sin\theta\sin\phi\cos\phi,\; \dot{\theta}\cos^2\phi \Big).    
   \end{equation}

Moreover, since $\tau_{\alpha}$ = $-\langle \dot{B}_{\alpha}\,, N_{\alpha}\rangle$ (or using the third coordinate of third Frenet Equation \eqref{eq:frenet}), we obtain

  \begin{equation}\label{taualpha} 
   \tau_{\alpha} = \dot{\theta}\sin\phi +\dot{\psi}, \ \ \mbox{with} \ \ \psi  =  \arctan\left( \frac{\dot{\phi}}{\dot{\theta}\cos\phi} \right).
   \end{equation}

Since $\beta(t) = \alpha(s(t))$, by chain rule we have

$$\beta'(t)= \dot{\alpha}(s(t))\,\,\dfrac{ds}{dt}= T_{\alpha}(s(t))\,\,\dfrac{ds}{dt}.$$

Thus and from equation \eqref{Talpha}, we get  \eqref{eq:deriv_comp} .

From equation \eqref{eq:ThetaPhi}, we obtain 
$$\Theta^{\prime}(t)=\dot{\theta}(s(t)).\dfrac{ds}{dt}\ ,\  \Phi^{\prime}(t)=\dot{\phi}(s(t)).\dfrac{ds}{dt}.$$    

Thus and from equation \eqref{curvaturaalpha}
$$\kappa_\beta(t) =\dfrac{1}{\dfrac{ds}{dt}} \sqrt{\left[\dot{\theta}(s(t)).\dfrac{ds}{dt}\right]^2\cos^2\phi(s(t))+\left[\dot{\phi}(s(t)).\dfrac{ds}{dt}\right]^2},$$
hence we obtain \eqref{eq:kappa_x}, and consequently \eqref{eq:N_x} and \eqref{eq:B_x}.

Finally,
$$\dfrac{1}{\dfrac{ds}{dt}}\cdot\Theta'(t)\sin\Phi(t)=\dfrac{1}{\dfrac{ds}{dt}}\cdot \dfrac{ds}{dt}(t)\cdot \dot{\theta}(s(t))\cdot \sin\phi(s(t))= \dot{\theta}(s(t))\cdot \sin\phi(s(t)).$$

Let $f(s)=\arctan\left(\dfrac{\dot{\phi}(s)}{\dot{\theta}(s)\cos\phi(s)}\right)$, so that $\psi=f$ in \eqref{taualpha}. Since $\Theta'(t)=\dot{\theta}(s(t))\dfrac{ds}{dt}$ and $\Phi'(t)=\dot{\phi}(s(t))\dfrac{ds}{dt}$, the common factor $\dfrac{ds}{dt}$ cancels in the quotient and
\begin{gather*}
\arctan\left( \frac{\Phi'(t)}{\Theta'(t)\cos\Phi(t)} \right)=f(s(t)),\\
\text{hence}\qquad \dfrac{d}{dt}\left[ \arctan\left( \frac{\Phi'(t)}{\Theta'(t)\cos\Phi(t)} \right) \right]=\dot{f}(s(t))\,\dfrac{ds}{dt}(t).
\end{gather*}
Therefore
\begin{align*}
&\dfrac{1}{\dfrac{ds}{dt}}\left\{\Theta'(t)\sin\Phi(t) +\dfrac{d}{dt}\left[ \arctan\left( \frac{\Phi'(t)}{\Theta'(t)\cos\Phi(t)} \right) \right]\right\}\\
&\qquad=\dot{\theta}(s(t))\sin\phi(s(t)) +\dot{f}(s(t))=\tau_{\alpha}(s(t))=\tau_{\beta}(t).
\end{align*}
\end{proof}

By a coordinate system we mean a proper rotation of the axes (determinant $+1$): $\kappa_\beta$, $\tau_\beta$ and the Frenet frame (up to the rotation) do not change, and $(\Theta,\Phi)$ are the angular coordinates of $T_\beta$ in that system, which requires $T_\beta\neq\pm e_3$ along $\beta$. We write $D=\dfrac{ds}{dt}>0$ and assume that $\Theta$ and $\Phi$ are of class $C^2$ on an open interval $J$; in particular, a derivative that does not vanish has constant sign.

\begin{remark}[Straight lines and planar curves]\label{rem:planar}
Recall that, if $\kappa_\beta>0$, $\beta$ is planar if and only if $\tau_\beta\equiv0$, i.e.\ if and only if $B_\beta$ is constant.
\begin{enumerate}
\item[(0)] \textbf{Straight lines.} $\kappa_\beta\equiv0$ if and only if $\beta$ is a straight line, if and only if $\Theta$ and $\Phi$ are both constant. In this case $T_\beta$ is constant and $N_\beta$, $B_\beta$ and $\tau_\beta$ are not defined. Items (a), (b) and Corollary~\ref{cor:helix_slant} assume $\kappa_\beta>0$.
\item[(a)] $\beta$ is planar if and only if there is a coordinate system in which $\Phi\equiv0$. In this case $\kappa_\beta=|\Theta'|/D$, $N_\beta=\operatorname{sgn}(\Theta')(-\sin\Theta,\cos\Theta,0)$, $B_\beta=\operatorname{sgn}(\Theta')(0,0,1)$ and $\tau_\beta\equiv0$.
\item[(b)] If $\Theta\equiv\Theta_0$ is constant in some coordinate system, then $\beta$ is planar, and $\kappa_\beta=|\Phi'|/D$, $N_\beta=\operatorname{sgn}(\Phi')(-\cos\Theta_0\sin\Phi,-\sin\Theta_0\sin\Phi,\cos\Phi)$, $B_\beta=\operatorname{sgn}(\Phi')(\sin\Theta_0,-\cos\Theta_0,0)$, $\tau_\beta\equiv0$. Conversely, if $\beta$ is planar and there is a unit vector $w$ parallel to its plane with $\pm w\notin T_\beta(J)$, then $\Theta$ is constant in every coordinate system with $e_3=w$.
\end{enumerate}
\end{remark}

\begin{proof}(0) By \eqref{eq:kappa_x}, $\kappa_\beta=0$ if and only if $\Theta'\cos\Phi=0$ and $\Phi'=0$; since $\cos\Phi>0$, this means $\Theta'=\Phi'=0$, i.e.\ $T_\beta$ is constant. Then $\beta(t)=\beta(t_0)+\Big(\int_{t_0}^{t}\frac{ds}{d\tau}\,d\tau\Big)T_\beta$ is a straight line. Conversely, a straight line has constant $T_\beta$, hence $\Theta'=\Phi'=0$ (in a coordinate system with $T_\beta\neq\pm e_3$).
(a) If $\Phi\equiv0$, then $(\beta_3)'=D\sin\Phi=0$, so $\beta$ lies in a plane $z=\text{const}$. Conversely, if $\beta$ lies in a plane with unit normal $v$, then $\langle T_\beta,v\rangle=0$; taking $e_3=v$ gives $\sin\Phi=0$. For $\Phi\equiv0$ the formulas follow from \eqref{eq:kappa_x}, \eqref{eq:N_x}, \eqref{eq:B_x}, and $\tau_\beta=0$ from \eqref{eq:tau_x} (here $\Theta'\neq0$ because $\kappa_\beta>0$).

(b) Let $v_0=(-\sin\Theta_0,\cos\Theta_0,0)$. Then $\langle\beta',v_0\rangle=D\cos\Phi(-\cos\Theta_0\sin\Theta_0+\sin\Theta_0\cos\Theta_0)=0$, so $\langle\beta,v_0\rangle$ is constant. The formulas follow from \eqref{eq:kappa_x}, \eqref{eq:N_x}, \eqref{eq:B_x} with $\Theta'=0$; $B_\beta$ is constant, hence $\tau_\beta=0$ (formula \eqref{eq:tau_x} does not apply, since $\Theta'=0$). Conversely, let $e_3=w$. The unit normal $v$ of the plane is orthogonal to $w$, so $v=(-\sin a,\cos a,0)$, and $0=\langle T_\beta,v\rangle=\cos\Phi\,\sin(\Theta-a)$. Since $\cos\Phi>0$, $\Theta$ is a continuous function with values in $a+\pi\mathbb{Z}$, hence constant.
\end{proof}

\begin{corollary}[Helices and slant helices]\label{cor:helix_slant}
Let $\beta$ be as in Proposition~\ref{thm:arbitrary_param}, with $\kappa_\beta>0$. Recall that $\beta$ is a \emph{helix} if $T_\beta$ makes a constant angle with a fixed direction, and a \emph{slant helix} if $N_\beta$ does.
\begin{enumerate}
\item $\beta$ is a non-planar helix if and only if there is a coordinate system in which $\Phi\equiv\Phi_0$ with $\Phi_0\in(-\frac\pi2,\frac\pi2)\setminus\{0\}$. In this case $\Theta'\neq0$ and
\[
\kappa_\beta=\frac{\cos\Phi_0\,|\Theta'|}{D},\qquad \tau_\beta=\frac{\sin\Phi_0\,\Theta'}{D},\qquad \frac{\tau_\beta}{\kappa_\beta}=\operatorname{sgn}(\Theta')\tan\Phi_0
\]
(equivalently, by Lancret's theorem \cite{Lancret1802}, $\tau_\beta/\kappa_\beta$ is a nonzero constant). The case $\Phi_0=0$ is the planar case of Remark~\ref{rem:planar}.
\item $\beta$ is a slant helix and not a helix if and only if there is a coordinate system in which the third coordinate of $N_\beta$ is a nonzero constant $1/m$, that is,
\[
\frac{\Phi'\cos\Phi}{\sqrt{\Theta'^2\cos^2\Phi+\Phi'^2}}=\frac1m,
\]
equivalently, $\Phi'\neq0$, $\operatorname{sgn}m=\operatorname{sgn}\Phi'$ and
\[
\Theta'^2=\big(m^2-\sec^2\Phi\big)\Phi'^2 .
\]
In this case $|m|>1$, $m^2\cos^2\Phi>1$, $\Theta'\neq0$ and, with $\varepsilon=\operatorname{sgn}(\Theta'\Phi')$,
\[
\kappa_\beta=\frac{|m|\cos\Phi\,|\Phi'|}{D},\qquad
\tau_\beta=\frac{\varepsilon\,m^2\sin\Phi\cos\Phi\,\Phi'}{D\sqrt{m^2\cos^2\Phi-1}} .
\]
\end{enumerate}
\end{corollary}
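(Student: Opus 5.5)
Both items reduce to one principle: choose the coordinate system so that the fixed direction of the definition is $e_3$, read off the condition from the third coordinate of $T_\beta$ or $N_\beta$ in \eqref{eq:T_x} and \eqref{eq:N_x}, and then substitute into \eqref{eq:kappa_x} and \eqref{eq:tau_x}. A proper rotation carrying the fixed unit direction $u$ to $e_3$ always exists. We only need $T_\beta\neq\pm e_3$. For a non-planar helix this holds automatically, since the angle is constant and not $0$ or $\pi$: if it were, $T_\beta$ would be constant and $\kappa_\beta=0$.

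\textbf{Item 1.} If $T_\beta$ makes a constant angle with $u=e_3$, then $\langle T_\beta,e_3\rangle=\sin\Phi$ is constant, so $\Phi\equiv\Phi_0$. Conversely, $\Phi\equiv\Phi_0$ gives a constant angle with $e_3$.

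With $\Phi'=0$, \eqref{eq:kappa_x} gives $\kappa_\beta=\cos\Phi_0|\Theta'|/D$, and $\kappa_\beta>0$ forces $\Theta'\neq0$. Then the arctan term in \eqref{eq:tau_x} is $\arctan 0=0$, so $\tau_\beta=\Theta'\sin\Phi_0/D$, and the ratio $\tau_\beta/\kappa_\beta=\operatorname{sgn}(\Theta')\tan\Phi_0$ follows. Non-planarity is equivalent to $\tau_\beta\not\equiv0$, which here means $\Phi_0\neq0$. Conversely, $\Phi_0=0$ is planar by Remark~\ref{rem:planar}(a). The equivalence with Lancret's theorem is just Theorem~\ref{helix characterization}.

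\textbf{Item 2.} Rotate so that the fixed direction of $N_\beta$ is $e_3$. We need to know that $T_\beta\neq\pm e_3$ can be arranged. Since $N_\beta\perp T_\beta$ and $\langle N_\beta,e_3\rangle=1/m\neq0$, $T_\beta=\pm e_3$ is not ruled out pointwise; so I would work on a subinterval where it fails, or argue that $T_\beta=\pm e_3$ on an open set would make $T_\beta$ constant. This is a minor technical point.

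The third coordinate of \eqref{eq:N_x} is $\Phi'\cos\Phi/\sqrt{\Theta'^2\cos^2\Phi+\Phi'^2}$, so the slant helix condition is exactly the displayed equation.

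\emph{The constant is nonzero exactly when the curve is not a helix.} If the constant is $0$, then $\Phi'\equiv0$, and the curve is a helix (or planar) by item 1. Conversely, suppose $\beta$ is a helix with axis $v$. Differentiating $\langle T_\beta,v\rangle=c$ gives $\langle N_\beta,v\rangle=0$, so for a helix that is also a slant helix the natural axis has constant $0$. I would argue that a nonzero constant cannot occur for a genuine helix unless it is a circle-type degenerate case, via the standard fact that $N_\beta$ of a helix traces a great circle. This equivalence is the step I expect to be the main obstacle, since the statement implicitly claims it.

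\emph{The algebraic reformulation.} Squaring the displayed equation gives $m^2\Phi'^2\cos^2\Phi=\Theta'^2\cos^2\Phi+\Phi'^2$. Dividing by $\cos^2\Phi$ yields $\Theta'^2=(m^2-\sec^2\Phi)\Phi'^2$. The sign condition $\operatorname{sgn}m=\operatorname{sgn}\Phi'$ recovers the unsquared equation. Moreover $|1/m|\le1$, with equality only if $\Theta'=0$, which is planar by Remark~\ref{rem:planar}(b) and so excluded. Hence $|m|>1$. Since $\Theta'^2>0$ when $\Phi'\ne0$, we get $m^2\cos^2\Phi>1$.

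\emph{The curvature formulas.} Substituting into \eqref{eq:kappa_x} gives $\sqrt{\cdot}=|m|\cos\Phi|\Phi'|$. For torsion, the arctan argument is
\[
\frac{\Phi'}{\Theta'\cos\Phi}=\frac{\varepsilon}{\sqrt{m^2\cos^2\Phi-1}}.
\]
Differentiating gives $\varepsilon m^2\sin\Phi\cos\Phi\,\Phi'/\bigl[(m^2\cos^2\Phi-1)^{1/2}\,m^2\cos^2\Phi\bigr]$ after simplifying $1+1/(m^2\cos^2\Phi-1)$. We add the term $\Theta'\sin\Phi=\varepsilon\Phi'\sin\Phi\sqrt{m^2\cos^2\Phi-1}/\cos\Phi$. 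Combining over the common denominator gives numerator $\varepsilon\Phi'\sin\Phi\,[(m^2\cos^2\Phi-1)+1]$, which yields the stated $\tau_\beta$. This is routine algebra that I would verify carefully.
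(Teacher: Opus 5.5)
Item~1 is fine and matches the paper's argument. Item~2, however, has two genuine gaps.

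\textbf{First gap: ``not a helix'' in the converse is unproved.} You never show that a curve with $\langle N_\beta,e_3\rangle\equiv 1/m\neq0$ is not a helix. You sketch a great-circle argument, hedge about a ``circle-type degenerate case'', and flag it as the obstacle, so that half of the equivalence is missing. The paper closes it with formulas you derive anyway. With $u=\sqrt{m^2\cos^2\Phi-1}>0$ one gets $\tau_\beta/\kappa_\beta=\operatorname{sgn}(\Theta')\,|m|\sin\Phi/u$. The identity $u^2+m^2\sin^2\Phi=m^2-1$ then gives $(\sin\Phi/u)'=\cos\Phi\,\Phi'(m^2-1)/u^3\neq0$. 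So $\tau_\beta/\kappa_\beta$ is not constant, and item~1 rules out helices.

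Your own idea also finishes, with no exceptional case. If $\langle T_\beta,v\rangle$ is constant, then $D\kappa_\beta\langle N_\beta,v\rangle=0$. So $N_\beta$ lies on the unit sphere, in the plane $v^\perp$ through the origin, and in the plane $z=1/m$ not through the origin. That set has at most two points, so $N_\beta$ is constant. Then $0=\langle T_\beta,N_\beta\rangle'=D\kappa_\beta$, a contradiction.

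\textbf{Second gap: $\Theta'\neq0$ is asserted, not proved.} The sentence ``Since $\Theta'^2>0$ when $\Phi'\neq0$, we get $m^2\cos^2\Phi>1$'' is a non sequitur. From $\Theta'^2=(m^2-\sec^2\Phi)\Phi'^2$ and $\Phi'\neq0$ you only get $m^2\cos^2\Phi\geq1$, and $\Theta'\neq0$ is exactly what must be shown. You need it for three reasons: \eqref{eq:tau_x} holds only where $\Theta'\neq0$; $\varepsilon=\operatorname{sgn}(\Theta'\Phi')$ must be constant to differentiate $\arctan(\varepsilon/u)$; and $u$ appears as a denominator. The non-helix computation above also uses $u>0$.

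The paper argues as follows. The function $g=m^2\cos^2\Phi-1\geq0$ is $C^1$ on the open interval, so at a zero $t_0$ it has a minimum and $g'(t_0)=0$. But $\cos^2\Phi(t_0)=m^{-2}<1$ forces $\sin\Phi(t_0)\neq0$, hence $g'(t_0)=-2m^2\cos\Phi\sin\Phi\,\Phi'\neq0$, a contradiction.

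\textbf{Smaller points.}
\begin{itemize}
\item Your proof of $|m|>1$ excludes the equality case by saying the curve would be planar, hence a helix. In the converse direction nothing yet excludes helices, so this relies on the first gap. The direct reason is that $m^2=1$ forces $\sec^2\Phi\leq1$, so $\Phi\equiv0$, contradicting $\Phi'\neq0$.
\item $T_\beta=\pm e_3$ \emph{is} ruled out pointwise. It would give $\langle N_\beta,e_3\rangle=\pm\langle N_\beta,T_\beta\rangle=0\neq1/m$, which is exactly the paper's admissibility argument for the rotation.
\end{itemize}
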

\begin{proof}
(1) ($\Rightarrow$) Let $\langle T_\beta,v\rangle=c$ with $|v|=1$. If $c=0$, then $\langle\beta',v\rangle=0$ and $\beta$ is planar; so $c\neq0$. Since $\kappa_\beta>0$, $T_\beta$ is not constant, so $|c|<1$. Rotating $v$ to $e_3$ (admissible, since $\langle T_\beta,e_3\rangle=c\neq\pm1$) gives $\sin\Phi=c$, i.e.\ $\Phi\equiv\Phi_0=\arcsin c\neq0$. ($\Leftarrow$) If $\Phi\equiv\Phi_0$ then $\langle T_\beta,e_3\rangle=\sin\Phi_0$ is constant, so $\beta$ is a helix. By \eqref{eq:kappa_x} with $\Phi'=0$, $\kappa_\beta=\cos\Phi_0|\Theta'|/D>0$, so $\Theta'\neq0$; by \eqref{eq:tau_x}, $\tau_\beta=\sin\Phi_0\Theta'/D\neq0$, so $\beta$ is non-planar.

(2) ($\Rightarrow$) Let $\langle N_\beta,v\rangle=d$ with $|v|=1$. If $d=0$, then $\langle T_\beta',v\rangle=0$, so $\langle T_\beta,v\rangle$ is constant and $\beta$ is a helix; hence $d\neq0$. Rotate $v$ to $e_3$; this is admissible, because $T_\beta=\pm e_3$ would give $\langle N_\beta,e_3\rangle=0$. By \eqref{eq:N_x}, $\langle N_\beta,e_3\rangle=\Phi'\cos\Phi/\sqrt{\Theta'^2\cos^2\Phi+\Phi'^2}=d$. Squaring and putting $m=1/d$ gives $\Theta'^2=(m^2-\sec^2\Phi)\Phi'^2$, and $d\neq0$ gives $\Phi'\neq0$ with $\operatorname{sgn}\Phi'=\operatorname{sgn}d$. Conversely, if $\Phi'\neq0$, $\operatorname{sgn}m=\operatorname{sgn}\Phi'$ and $\Theta'^2=(m^2-\sec^2\Phi)\Phi'^2$, then $\Theta'^2\cos^2\Phi+\Phi'^2=m^2\cos^2\Phi\,\Phi'^2$, and so $\langle N_\beta,e_3\rangle=\Phi'\cos\Phi/(|m|\cos\Phi|\Phi'|)=1/m$.

\emph{The formulas.} From $\Theta'^2\ge0$, $m^2\ge\sec^2\Phi\ge1$; $m^2=1$ would force $\Phi\equiv0$, so $|m|>1$. Also $\Theta'^2\cos^2\Phi+\Phi'^2=m^2\cos^2\Phi\,\Phi'^2$ gives $\kappa_\beta$. Let $g=m^2\cos^2\Phi-1=\Theta'^2\cos^2\Phi/\Phi'^2\ge0$. If $g(t_0)=0$, then $\cos^2\Phi(t_0)=m^{-2}<1$, so $\Phi(t_0)\neq0$ and $g'(t_0)=-2m^2\cos\Phi\sin\Phi\,\Phi'\neq0$, which is impossible for $g\ge0$. Hence $g>0$ and $\Theta'\neq0$. Put $u=\sqrt g>0$. Then $\Theta'=\varepsilon\Phi'u/\cos\Phi$ with $\varepsilon=\operatorname{sgn}(\Theta'\Phi')$ constant, $\dfrac{\Phi'}{\Theta'\cos\Phi}=\dfrac{\varepsilon}{u}$ and $u^2+1=m^2\cos^2\Phi$. Since $u'=-m^2\cos\Phi\sin\Phi\,\Phi'/u$,
\[
\Big[\arctan\frac{\varepsilon}{u}\Big]'=-\frac{\varepsilon u'}{u^2+1}=\frac{\varepsilon\sin\Phi\,\Phi'}{u\cos\Phi},
\qquad
\Theta'\sin\Phi=\frac{\varepsilon\Phi'u\sin\Phi}{\cos\Phi},
\]
and \eqref{eq:tau_x} gives
\[
D\tau_\beta=\varepsilon\Phi'\tan\Phi\Big(u+\frac1u\Big)=\frac{\varepsilon\,m^2\sin\Phi\cos\Phi\,\Phi'}{u}.
\]

\emph{Not a helix.} By the above, $\tau_\beta/\kappa_\beta=\operatorname{sgn}(\Theta')|m|\sin\Phi/u$, and $(\sin\Phi/u)'=\cos\Phi\,\Phi'(m^2-1)/u^3\neq0$, because $u^2+m^2\sin^2\Phi=m^2-1$. So $\tau_\beta/\kappa_\beta$ is not constant. By part~(1) and Remark~\ref{rem:planar}, a helix has $\tau_\beta/\kappa_\beta$ constant, since this quotient does not depend on the coordinate system. Hence $\beta$ is not a helix.
\end{proof}

\subsection{Mannheim pair}

\begin{definition}\label{def:mannheim}
A curve $\alpha(s)$ is called a \emph{Mannheim curve} if there exist a differentiable curve $\alpha^*(s^*)$ and a reparametrization $s^*=s^*(s)$ such that, at corresponding points, the principal normal line of $\alpha$ coincides with the binormal line of $\alpha^*$. The curve $\alpha^*$ is called the \emph{Mannheim partner} of $\alpha$, and $(\alpha,\alpha^*)$ is called a \emph{Mannheim pair}. Equivalently,
\begin{equation}
    \overrightarrow{\alpha(s)\alpha^*(s^*)}// N_\alpha(s)// B_{\alpha^*}(s^*).
\end{equation}
\end{definition}

\begin{remark}\label{rem:mannheim-sign}
Throughout this paper, we assume without loss of generality that
\begin{equation}\label{fundamental hypotesis Mannheim Pair}
    N_\alpha(s)=-B_{\alpha^*}(s^*).
\end{equation}
\end{remark}

\begin{example}[\cite{Mendonca2026}]\label{Example Mannheim pair slanthelixXhelix} Mannheim pair (Slant helix, helix). 
\begin{align*}
\alpha(s)&= \dfrac{1}{2}
             \Bigl(
             \ln\left(\operatorname{sech}(s) \right)-\tanh^2 (s), \arcsin\left(\operatorname{sech}(s) \right)+\operatorname{sech}(s)\tanh(s),\\
&\qquad\qquad \sqrt{3}\ln\left(\operatorname{sech}(s) \right)+\frac{\sqrt{3}}{3}
             \Bigr),\ s<0,\\
\alpha^{*}(s^{*}) &=  \dfrac{1}{2}
             \Bigl(
             s^{*}-e^{2s^{*}}\ ,\ e^{s^{*}}\sqrt{1-e^{2s^{*}}}+\arcsin(e^{s^*})\ ,\ \sqrt{3}\ s^{*}
             \Bigr),\ s^{*}<0.
\end{align*}
 is a Mannheim pair with $\lambda=\dfrac{1}{\sqrt{3}}$.
 
 In fact, for $s^{*}(s)=\ln(\operatorname{sech}(s))$ with $s<0$, 

\begin{equation}
\alpha^{*}(s^{*}(s))-\alpha(s)=\frac{1}{\sqrt{3}}\cdot\dfrac{1}{2}\left(\sqrt{3}(2\tanh^2(s)-1), -2\sqrt{3}\operatorname{sech}(s)\tanh(s), -1 \right)    
\end{equation}
 
 \begin{equation}
 N_{\alpha}(s)=\frac{1}{2}\left(\sqrt{3}\left(2\tanh^2(s) - 1\right), -2\sqrt{3}\operatorname{sech}(s) \tanh(s), -1\right)  
 \end{equation}
 
\begin{equation}
 B_{\alpha^{*}}(s^{*}(s))=-\dfrac{1}{2}\left(\sqrt{3}\left(2\tanh^2(s)-1\right),  -2\sqrt{3}\operatorname{sech}(s)\tanh(s), -1 \right)  
\end{equation}

Then 
$$\alpha^{*}(s^{*}(s))-\alpha(s)=\dfrac{1}{\sqrt{3}}.N_{\alpha}(s)=-\dfrac{1}{\sqrt{3}}.B_{\alpha^{*}}(s^{*}(s)).$$
\end{example}

The next theorem collects the relations between the Frenet Apparatus of a Mannheim pair, obtained by straightforward calculations from Theorems~3 and~5 of \cite{OrbayKasap}.

\begin{theorem}[Orbay and Kasap \cite{OrbayKasap}]\label{lem:relations}Let $\{T_\alpha, N_\alpha, B_\alpha, \kappa_\alpha, \tau_\alpha\}$ and $\{T_{\alpha^*}, N_{\alpha^*}, B_{\alpha^*}, \kappa_{\alpha^*}, \tau_{\alpha^*}\}$ be the Frenet apparatus of the Mannheim pair $(\alpha, \alpha^*)$, respectively. Define $\psi(s, s^*) = \angle(T_\alpha(s), T_{\alpha^*}(s^*))$, and set $\bm{a}(s) = \psi(s, s^*(s))$, $\bm{A}(s^*) = \psi(s(s^*), s^*)$. There exists a nonzero constant $\lambda$ such that
\begin{equation}\label{eq:alpha_star}
\alpha^*(s^*) = \alpha(s(s^{*})) + \lambda N_\alpha(s(s^{*})), 
\end{equation}
or equivalently 
\begin{equation}\label{eq:alpha}
\alpha(s)=\alpha^*(s^{*}(s))+\lambda B_{\alpha^{*}}(s^{*}(s)).
   \end{equation}
And if $\kappa_{\alpha^*}\neq 0$ the following relations hold:
\begin{align}
T_{\alpha^*}(s^{*}) &= \cos\mathbf{A}(s^{*})\cdot T_\alpha(s(s^{*})) - \sin\mathbf{A}(s^{*})\cdot B_\alpha(s(s^{*})), \label{eq:T_star}\\
\nonumber 
\\
N_{\alpha^*}(s^{*}) &= \sin\mathbf{A}(s^{*})\cdot T_\alpha(s(s^{*})) + \cos\mathbf{A}(s^{*}) \cdot B_\alpha(s(s^{*})), \label{eq:N_star}\\
\nonumber 
\\
T_\alpha(s) &= \cos\bm{a}(s)\cdot T_{\alpha^*}(s^{*}(s)) + \sin\bm{a}(s)\cdot N_{\alpha^*}(s^{*}(s)), \label{eq:T_alpha}\\
\nonumber 
\\
B_\alpha(s) &= -\sin\bm{a}(s)\cdot T_{\alpha^*}(s^{*}(s)) + \cos\bm{a}(s)\cdot N_{\alpha^{*}}(s^{*}(s)), \label{eq:B_alpha}\\
\nonumber 
\\
\kappa_\alpha &= \frac{1}{\lambda}\sin^2\bm{a}, \qquad \tau_\alpha = -\frac{1}{\lambda}\sin\bm{a}\cos\bm{a}, \label{eq:kappa_tau_alpha}\\
\kappa_{\alpha^*} &= -\frac{d\bm{A}}{ds^*}, \qquad \tau_{\alpha^*} = -\frac{1}{\lambda}\tan\bm{A}, \label{eq:kappa_tau_star}\\
\cos\bm{a} &= \frac{ds^*}{ds}, \qquad \sec\bm{A} =\dfrac{ds}{ds^{*}}.  \label{eq:cos_a}
\end{align}
\end{theorem}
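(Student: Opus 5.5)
The plan is to differentiate the defining relation of the pair and then read off every identity by comparing Frenet components. By Definition~\ref{def:mannheim} and Remark~\ref{rem:mannheim-sign} we may write $\alpha^*(s^*(s))=\alpha(s)+\lambda(s)N_\alpha(s)$ for a differentiable scalar $\lambda(s)$. Differentiating in $s$ and using the Frenet equations \eqref{eq:frenet} gives
\[
T_{\alpha^*}\frac{ds^*}{ds}=(1-\lambda\kappa_\alpha)T_\alpha+\dot\lambda N_\alpha+\lambda\tau_\alpha B_\alpha .
\]
Since $T_{\alpha^*}\perp B_{\alpha^*}=-N_\alpha$, the $N_\alpha$-component vanishes, so $\dot\lambda=0$ and $\lambda$ is a nonzero constant. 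This yields \eqref{eq:alpha_star}; substituting $N_\alpha=-B_{\alpha^*}$ gives \eqref{eq:alpha}.

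Next, $T_{\alpha^*}$ and $N_{\alpha^*}$ both lie in the plane $N_\alpha^\perp=\operatorname{span}\{T_\alpha,B_\alpha\}$. Define $\bm A$ as the angle from $T_\alpha$ to $T_{\alpha^*}$, with the sign fixed so that $T_{\alpha^*}=\cos\bm A\,T_\alpha-\sin\bm A\,B_\alpha$. Right-handedness, $N_{\alpha^*}=B_{\alpha^*}\times T_{\alpha^*}=-N_\alpha\times T_{\alpha^*}$, then forces \eqref{eq:N_star}. Inverting this orthogonal rotation gives \eqref{eq:T_alpha} and \eqref{eq:B_alpha}, with $\bm a(s)=\bm A(s^*(s))$.

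Comparing the displayed derivative with \eqref{eq:T_star} gives
\[
\frac{ds^*}{ds}\cos\bm a=1-\lambda\kappa_\alpha,\qquad \frac{ds^*}{ds}\sin\bm a=-\lambda\tau_\alpha .
\]
A second relation is obtained by differentiating \eqref{eq:alpha} in $s^*$. Using $\dot B_{\alpha^*}=-\tau_{\alpha^*}N_{\alpha^*}$, this gives
\[
T_\alpha\frac{ds}{ds^*}=T_{\alpha^*}-\lambda\tau_{\alpha^*}N_{\alpha^*}.
\]
Comparing with \eqref{eq:T_alpha} yields $\frac{ds}{ds^*}\cos\bm A=1$ and $\frac{ds}{ds^*}\sin\bm A=-\lambda\tau_{\alpha^*}$. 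These give $\sec\bm A=ds/ds^*$, hence $\cos\bm a=ds^*/ds$ (the two derivatives are reciprocal), and $\tau_{\alpha^*}=-\frac1\lambda\tan\bm A$.

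Substituting $ds^*/ds=\cos\bm a$ into the first comparison gives $\cos^2\bm a=1-\lambda\kappa_\alpha$ and $\sin\bm a\cos\bm a=-\lambda\tau_\alpha$, which are \eqref{eq:kappa_tau_alpha}.

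For $\kappa_{\alpha^*}$, differentiate \eqref{eq:T_star} with respect to $s^*$. Writing $\dot T_\alpha$ and $\dot B_\alpha$ via Frenet and the factor $ds/ds^*=\sec\bm A$, the $N_\alpha$-terms cancel:
\[
\sec\bm A\,(\cos\bm A\,\kappa_\alpha+\sin\bm A\,\tau_\alpha)=\tfrac1\lambda(\sin^2\bm A-\sin^2\bm A)=0 .
\]
What remains is $\frac{d\bm A}{ds^*}\bigl(-\sin\bm A\,T_\alpha-\cos\bm A\,B_\alpha\bigr)=-\frac{d\bm A}{ds^*}N_{\alpha^*}$, so $\kappa_{\alpha^*}=-d\bm A/ds^*$.

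The main obstacle is sign bookkeeping: the orientation of $\bm A$ and the branch choices must be consistent with $N_\alpha=-B_{\alpha^*}$ and with $\kappa_{\alpha^*}>0$. This requires fixing the sign of $\bm A$ once, via \eqref{eq:T_star}, and checking each later identity against it. In particular, a positive $\kappa_{\alpha^*}$ requires choosing the branch on which $\bm A$ decreases.
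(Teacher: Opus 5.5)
Your argument is correct, but it is not the paper's route, because the paper gives no derivation of its own. It imports these relations from Theorems~3 and~5 of \cite{OrbayKasap} ``by straightforward calculations'', adapted to the convention $N_\alpha=-B_{\alpha^*}$. You instead derive everything directly: you differentiate the two defining identities $\alpha^*\circ s^*=\alpha+\lambda N_\alpha$ and $\alpha\circ s=\alpha^*+\lambda B_{\alpha^*}$ and compare components in the two Frenet frames. The component identities check out, as do the formula $N_{\alpha^*}=B_{\alpha^*}\times T_{\alpha^*}$ and the cancellation $\cos\bm A\,\kappa_\alpha+\sin\bm A\,\tau_\alpha=0$ behind $\kappa_{\alpha^*}=-d\bm A/ds^*$.

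The citation is shorter. Your route is self-contained and, more usefully, makes the conventions explicit, which matters here. Read literally as an unsigned angle in $[0,\pi]$, $\angle(T_\alpha,T_{\alpha^*})$ would force $\sin\bm A\ge 0$. That contradicts the paper's subsequent Corollary, which gives $\sin\bm A<0$ when $\tau_\alpha>0$ and $ds^*/ds>0$. So your definition of $\bm A$ as the oriented angle fixed by \eqref{eq:T_star} is what the statement actually requires. Your computation also gives $\lambda>0$ for free, from $\lambda\kappa_\alpha=\sin^2\bm a$ with $\kappa_\alpha>0$ and $\lambda\neq0$.

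Two small touch-ups. First, justify $\lambda\neq 0$ in one line: if $\lambda=0$ then $\alpha^*\circ s^*=\alpha$, so $B_{\alpha^*}=\pm B_\alpha$, which cannot equal $-N_\alpha$. Second, rephrase your closing remark, since nothing is being chosen there. Once \eqref{eq:T_star} fixes the sign of $\bm A$, the vector $B_{\alpha^*}\times T_{\alpha^*}$ is the genuine Frenet normal. Hence $\kappa_{\alpha^*}=-d\bm A/ds^*$ is forced, and the decrease of $\bm A$ is a consequence of $\kappa_{\alpha^*}>0$. The only real choice is a differentiable lift of $\bm A$. It exists because $(\cos\bm A,\sin\bm A)=(\langle T_{\alpha^*},T_\alpha\rangle,-\langle T_{\alpha^*},B_\alpha\rangle)$ is a differentiable unit vector, and shifting the lift by $2\pi k$ does not change $d\bm A/ds^*$.
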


The following corollary follows immediately from the preceding theorem.

\begin{corollary} Let $\left(\alpha(s),\alpha^{*}(s^{*})\right)$ be a Mannheim pair with $\kappa_{\alpha^*}\neq0$ (suppose $\dfrac{ds^{*}}{ds}>0$). Then 

\begin{enumerate}
    \item \begin{equation}\label{tau* e tau e kappa}
\tau_{\alpha^{*}}(s^{*})=\dfrac{1}{\lambda}\dfrac{\kappa_{\alpha}(s(s^{*}))}{\tau_{\alpha}(s(s^{*}))}   
\end{equation}
\item   \begin{equation}\label{cos a s sin a s}
  \cos\bm{A}(s^{*})=\dfrac{1}{\sqrt{1+\left(\dfrac{\kappa_{\alpha}(s(s^{*}))}{\tau_{\alpha}(s(s^{*}))}\right)^2}},\qquad   \sin\bm{A}(s^{*})=-\dfrac{\kappa_{\alpha}(s(s^{*}))}{\tau_{\alpha}(s(s^{*}))}.\dfrac{1}{\sqrt{1+\left(\dfrac{\kappa_{\alpha}(s(s^{*}))}{\tau_{\alpha}(s(s^{*}))}\right)^2}}. 
  \end{equation}
\item 
\begin{equation}\label{ds ds*}
\dfrac{ds}{ds^{*}}(s^{*})=\sqrt{1+\left(\dfrac{\kappa_{\alpha}(s(s^{*}))}{\tau_{\alpha}(s(s^{*}))}\right)^2}    
\end{equation}
\item  \begin{equation}\label{kappa* e tau e kappa}
\kappa_{\alpha^{*}}(s^{*})=\dfrac{\dfrac{d}{ds^{*}}\left(\dfrac{\kappa_{\alpha}(s(s^{*}))}{\tau_{\alpha}(s(s^{*}))}\right)}{1+\left(\dfrac{\kappa_{\alpha}(s(s^{*}))}{\tau_{\alpha}(s(s^{*}))}\right)^2}  
\end{equation}
\item \textbf{Mannheim equation:}
\begin{equation}\label{mannheim equation}
\kappa_{\alpha}(s)^2+\tau_{\alpha}(s)^2=\dfrac{1}{\lambda}\kappa_{\alpha}(s)
\end{equation}
\item 
  \begin{equation}\label{cos A s* sin A s*}
  \cos\bm{a}(s) = \dfrac{1}{\sqrt{1+\lambda^2\tau_{\alpha^*}(s^{*}(s))^2}}, \qquad
\sin\bm{a}(s)= -\dfrac{\lambda\,\tau_{\alpha^*}(s^{*}(s))}{\sqrt{1+\lambda^2\tau_{\alpha^*}(s^{*}(s))^2}}
   \end{equation}
  \item 
\begin{equation}
\dfrac{ds^{*}}{ds}(s)=\dfrac{1}{\sqrt{1+\lambda^2\tau_{\alpha^{*}}(s^{*}(s))^2}}    
\end{equation}
\item 
\begin{equation}\label{kappa kappa* e tau*}
\kappa_{\alpha}(s)=\dfrac{\lambda \tau_{\alpha^{*}}(s^{*}(s))^2}{1+\lambda^2 \tau_{\alpha^{*}}(s^{*}(s))^2} 
\end{equation}
\item 
\begin{equation}\label{tau kappa* e tau*}
\tau_{\alpha}(s)=\dfrac{\tau_{\alpha^{*}}(s^{*}(s))}{1+\lambda^2 \tau_{\alpha^{*}}(s^{*}(s))^2}     
\end{equation}
\item  \textbf{Mannheim pair equation:} 
\begin{equation}\label{mannheim pair equation}
\kappa_{\alpha^{*}}(s^{*})=\dfrac{\lambda \dfrac{d}{ds^{*}}\left(\tau_{\alpha^{*}}(s^{*})\right)}{1+\lambda^2 \tau_{\alpha^{*}}(s^{*})^2}     
\end{equation}
\end{enumerate}   
\end{corollary}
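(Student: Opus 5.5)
We obtain all ten items by algebraic manipulation of the relations in Theorem~\ref{lem:relations}, using the normalization $ds^*/ds>0$. That normalization gives $\cos\bm a=ds^*/ds>0$, and since $ds/ds^*=\sec\bm A$, also $\cos\bm A>0$. Because $\bm a(s)$ and $\bm A(s^*)$ are the same angle $\psi$ evaluated at corresponding points, $\bm A(s^*)=\bm a(s(s^*))$. We therefore work throughout with the single angle $\bm a$, with $\cos\bm a>0$.

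\textbf{Items (1)--(5).} We first treat the relations on the $\alpha$ side. From \eqref{eq:kappa_tau_alpha} we get $\kappa_\alpha/\tau_\alpha=-\tan\bm a$, which is defined where $\tau_\alpha\neq 0$. This requires $\sin\bm a\neq0$, which holds since $\kappa_\alpha>0$, and $\cos\bm a\neq0$, which holds since $\cos\bm a>0$. Substituting into $\tau_{\alpha^*}=-\frac1\lambda\tan\bm A$ from \eqref{eq:kappa_tau_star} gives \eqref{tau* e tau e kappa}. Since $\cos\bm A>0$, we have $\cos\bm A=1/\sqrt{1+\tan^2\bm A}$ and $\sin\bm A=\tan\bm A\cos\bm A$, which is \eqref{cos a s sin a s}. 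Combining this with $\sec\bm A=ds/ds^*$ from \eqref{eq:cos_a} yields \eqref{ds ds*}. For item~(4), we write $\kappa_{\alpha^*}=-d\bm A/ds^*$ and $\bm A=\arctan(\tan\bm A)=-\arctan(\kappa_\alpha/\tau_\alpha)$; this inversion is legitimate because $\cos\bm A>0$ keeps $\bm A$ in $(-\pi/2,\pi/2)$. Differentiating then gives \eqref{kappa* e tau e kappa}. Finally, \eqref{mannheim equation} follows by squaring and adding the two expressions in \eqref{eq:kappa_tau_alpha}: $\kappa_\alpha^2+\tau_\alpha^2=\lambda^{-2}\sin^2\bm a=\lambda^{-1}\kappa_\alpha$.

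\textbf{Items (6)--(10).} These are the dual relations, expressed in terms of $\tau_{\alpha^*}$. From \eqref{eq:kappa_tau_star} we have $\tan\bm a=-\lambda\tau_{\alpha^*}$. Using $\cos\bm a>0$ again, $\cos\bm a=(1+\lambda^2\tau_{\alpha^*}^2)^{-1/2}$ and $\sin\bm a=\tan\bm a\cos\bm a$, which is item~(6). Item~(7) then follows from $\cos\bm a=ds^*/ds$. Substituting $\sin^2\bm a=\lambda^2\tau_{\alpha^*}^2/(1+\lambda^2\tau_{\alpha^*}^2)$ and $\sin\bm a\cos\bm a=-\lambda\tau_{\alpha^*}/(1+\lambda^2\tau_{\alpha^*}^2)$ into \eqref{eq:kappa_tau_alpha} gives \eqref{kappa kappa* e tau*} and \eqref{tau kappa* e tau*}. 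For \eqref{mannheim pair equation}, we write $\bm A=-\arctan(\lambda\tau_{\alpha^*})$ and differentiate $\kappa_{\alpha^*}=-d\bm A/ds^*$ with respect to $s^*$.

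The only delicate point is sign bookkeeping. We must justify $\bm A=\bm a\circ s$ and $\cos\bm A,\cos\bm a>0$ from the hypothesis $ds^*/ds>0$, so that the square roots carry a positive sign and $\arctan$ inverts $\tan$ on the correct branch. We must also restrict to the set where $\tau_\alpha\neq0$ whenever the quotient $\kappa_\alpha/\tau_\alpha$ appears. In addition, the signs coming from the convention \eqref{fundamental hypotesis Mannheim Pair} have to be carried consistently through each substitution. Everything else is direct substitution.
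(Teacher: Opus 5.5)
Your proposal is correct and follows essentially the same route as the paper. Both arguments use $\cos\bm a=\frac{ds^*}{ds}>0$ and $\bm A(s^*)=\bm a(s(s^*))$, write $\kappa_\alpha/\tau_\alpha=-\tan\bm a$ and $\tan\bm A=-\lambda\tau_{\alpha^*}$, and read off each item from Theorem~\ref{lem:relations} using the positive square root. The only difference is cosmetic: in items (4) and (10) you differentiate $\bm A=-\arctan(\cdot)$, whereas the paper differentiates $\tan\bm A$ and uses $\sec^2\bm A=1+\tan^2\bm A$, which is the same computation.
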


\begin{proof}
Since $\frac{ds^*}{ds}>0$, the map $s\mapsto s^*(s)$ is a diffeomorphism onto an interval, so an identity at $s$ is the same as an identity at $s^*=s^*(s)$, and $\bm a(s)=\psi(s,s^*(s))=\bm A(s^*(s))$ by definition. Moreover $\cos\bm A=\frac{ds^*}{ds}>0$ (see \eqref{eq:cos_a}), so $\bm A\in(-\frac\pi2,\frac\pi2)$ and $\bm A=\arctan(-\lambda\tau_{\alpha^*})$ is differentiable. Since $\kappa_\alpha=\frac1\lambda\sin^2\bm a>0$ we have $\sin\bm a\neq0$, and $\cos\bm a=\frac{ds^*}{ds}>0$, so $\tau_\alpha\neq0$. Write $r=\kappa_\alpha/\tau_\alpha$. From \eqref{eq:kappa_tau_alpha},
\begin{equation}\label{eq:r_tan}
r(s)=\dfrac{\frac1\lambda\sin^2\bm a(s)}{-\frac1\lambda\sin\bm a(s)\cos\bm a(s)}=-\tan\bm a(s).
\end{equation}
\begin{enumerate}
\item By \eqref{eq:r_tan} and \eqref{eq:kappa_tau_star}, $\dfrac1\lambda r(s)=-\dfrac1\lambda\tan\bm a(s)=-\dfrac1\lambda\tan\bm A(s^*(s))=\tau_{\alpha^*}(s^*(s))$.
\item By \eqref{eq:r_tan}, $\sec^2\bm a=1+\tan^2\bm a=1+r^2$. Since $\cos\bm a>0$, $\cos\bm a=\dfrac{1}{\sqrt{1+r^2}}$ and $\sin\bm a=\tan\bm a\cos\bm a=-\dfrac{r}{\sqrt{1+r^2}}$. Evaluating at $s=s(s^*)$ gives $\cos\bm A(s^*)$ and $\sin\bm A(s^*)$.
\item By \eqref{eq:cos_a} and item~2, $\dfrac{ds}{ds^*}=\sec\bm A=\dfrac{1}{\cos\bm A}=\sqrt{1+r^2}$.
\item By \eqref{eq:r_tan}, $r(s(s^*))=-\tan\bm A(s^*)$. Differentiating with respect to $s^*$ and using \eqref{eq:kappa_tau_star},
$$\dfrac{d}{ds^*}\,r(s(s^*))=-\sec^2\bm A\,\dfrac{d\bm A}{ds^*}=\sec^2\bm A\,\kappa_{\alpha^*}(s^*),$$
and $\sec^2\bm A=1+r^2$ gives the formula.
\item It follows directly from \eqref{eq:kappa_tau_alpha}: $\kappa_\alpha^2+\tau_\alpha^2=\frac1{\lambda^2}\sin^2\bm a\,(\sin^2\bm a+\cos^2\bm a)=\frac1\lambda\kappa_\alpha$.
\item By \eqref{eq:kappa_tau_star}, $\tan\bm A(s^*)=-\lambda\tau_{\alpha^*}(s^*)$, hence $\tan\bm a(s)=\tan\bm A(s^*(s))=-\lambda\tau_{\alpha^*}(s^*(s))$. Since $\cos\bm a>0$, $\cos\bm a=\dfrac1{\sqrt{1+\lambda^2\tau_{\alpha^*}^2}}$ and $\sin\bm a=\tan\bm a\cos\bm a=-\dfrac{\lambda\tau_{\alpha^*}}{\sqrt{1+\lambda^2\tau_{\alpha^*}^2}}$.
\item By \eqref{eq:cos_a} and item~6.
\item By \eqref{eq:kappa_tau_alpha} and item~6, $\kappa_\alpha(s)=\dfrac1\lambda\sin^2\bm a=\dfrac1\lambda\dfrac{\lambda^2\tau_{\alpha^*}^2}{1+\lambda^2\tau_{\alpha^*}^2}=\dfrac{\lambda\tau_{\alpha^*}(s^*(s))^2}{1+\lambda^2\tau_{\alpha^*}(s^*(s))^2}$.
\item By \eqref{eq:kappa_tau_alpha} and item~6, $\sin\bm a=-\lambda\tau_{\alpha^*}\cos\bm a$, hence
$$\tau_\alpha(s)=-\dfrac1\lambda\sin\bm a\cos\bm a=\tau_{\alpha^*}\cos^2\bm a=\dfrac{\tau_{\alpha^*}(s^*(s))}{1+\lambda^2\tau_{\alpha^*}(s^*(s))^2}.$$
\item By \eqref{eq:kappa_tau_star}, $\dfrac{d\tau_{\alpha^*}}{ds^*}=-\dfrac1\lambda\sec^2\bm A\,\dfrac{d\bm A}{ds^*}=\dfrac1\lambda\sec^2\bm A\,\kappa_{\alpha^*}$, and $\sec^2\bm A=1+\tan^2\bm A=1+\lambda^2\tau_{\alpha^*}^2$ gives the formula.
\end{enumerate}
\end{proof}

\section{Mannheim pair equation with respect to a generic parameter}\label{sec:mannheim_x}
  
We can state below the characterization of a Mannheim pair with respect to any other general parameter $t, x$ of the curves.
Before that, we establish a result about the reparametrization of a function.

\begin{proposition}\label{proposicao reparametrizacao nula} A function $f:I\longrightarrow J$, $f(s)$, is null if and only if the function $g=f\circ h:L\subset \mathbb{R}\longrightarrow J$ is null, where $h:L\subset \mathbb{R}\longrightarrow I$ is a diffeomorphism.
\end{proposition}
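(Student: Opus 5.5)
The statement is elementary, and I will prove both implications directly from the definition of the zero function, using only that a diffeomorphism $h:L\to I$ is a bijection. Throughout, "null" means identically zero on the whole domain: $f$ is null when $f(s)=0$ for every $s\in I$, and $g$ is null when $g(v)=0$ for every $v\in L$.

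\emph{($\Rightarrow$)} Assume $f\equiv 0$ on $I$. Take any $v\in L$. Since $h(v)\in I$, we get $g(v)=f(h(v))=0$. Hence $g$ is null. This direction needs only that $h$ maps $L$ into $I$.

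\emph{($\Leftarrow$)} Assume $g\equiv0$ on $L$. Take any $s\in I$. Because $h$ is surjective, there is $v=h^{-1}(s)\in L$. Then
\[
f(s)=f\bigl(h(h^{-1}(s))\bigr)=g\bigl(h^{-1}(s)\bigr)=0 .
\]
Equivalently, $f=g\circ h^{-1}$, so $f$ is itself a reparametrization of a null function and is null by the first direction.

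The only point requiring any care is that the converse uses surjectivity of $h$ onto $I$. A merely injective map would only force $f$ to vanish on $h(L)$, not on all of $I$. Surjectivity is part of $h$ being a diffeomorphism onto $I$, so there is no real obstacle. No smoothness is needed. The result will presumably be applied later with $h=s(t)$ or $s^*(x)$, to transfer the Mannheim (pair) equations \eqref{mannheim equation} and \eqref{mannheim pair equation}, written as "some expression $=0$", from arc length to a generic parameter.
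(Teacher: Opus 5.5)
Your proof is correct and is essentially the paper's argument: the forward direction by direct substitution $g(t)=f(h(t))=0$, and the converse via $f(s)=g(h^{-1}(s))=0$ for every $s\in I$. Your added observation is also accurate: the converse needs only that $h$ is onto $I$, and no smoothness of $h$ is used.
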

\begin{proof} 
($\Rightarrow$)  If $f(s)=0$, $\forall s\in I$, then  $g(t)=f(h(t))=0$, $\forall t\in L$.

($\Leftarrow$)  If $g(t)=0$, $\forall t\in L$, then  $f(s)=g(h^{-1}(s))=0$, $\forall s\in I$.
\end{proof}

Hence, let $\beta(t) = \alpha(s(t))$ be a reparametrization of $\alpha$ with $\dfrac{ds}{dt}>0$, and $\beta^{*}(t) = \alpha^{*}(s^{*}(s(t)))$. Or consider a reparametrization $\beta^{*}(x) = \alpha^{*}(s^{*}(x))$ of $\alpha^{*}$ with $\dfrac{ds^{*}}{dx}>0$, and $\beta(x) = \alpha(s(s^{*}(x)))$.

\begin{proposition}
\begin{enumerate}  
    \item  
    $\alpha(s)$ is a Mannheim curve with constant $\lambda\neq0$ if and only if 
    \begin{equation}\label{Mannheim x geral}
    \kappa_{\beta}(t)=\lambda \bigl[ \kappa_{\beta}(t)^2+\tau_{\beta}(t)^2\bigr],  
    \end{equation}
    for any reparametrization $s=s(t)$.
    \item 
    $\alpha^{*}(s^{*})$ is a Mannheim partner with constant $\lambda\neq0$ if and only if 

\begin{equation}\label{eq 2 mannheim pair em x}
 \kappa_{\beta^{*}}(x)\,\dfrac{ds^{*}}{dx}=\dfrac{\lambda\,\dfrac{d}{dx}\bigl(\tau_{\beta^{*}}(x)\bigr)}{1+\lambda^2 \bigl[\tau_{\beta^{*}}(x)\bigr]^2},  
\end{equation}
for any reparametrization $s^{*}=s^{*}(x)$.
\end{enumerate}
\end{proposition}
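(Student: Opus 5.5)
The plan is to reduce both items to the arc-length statements already available: the Mannheim equation \eqref{mannheim equation} and the Mannheim pair equation \eqref{mannheim pair equation}. Two tools do the transfer: the invariance of curvature and torsion under orientation-preserving reparametrization, \eqref{kappa at t} and \eqref{torsion at t}, and Proposition~\ref{proposicao reparametrizacao nula}, which says a function vanishes identically if and only if its composition with a diffeomorphism does. For the converse directions I take as known the characterization quoted in the introduction \cite{Liu, OrbayKasap}: a curve is Mannheim if and only if $\kappa=\lambda(\kappa^2+\tau^2)$. I also take the companion result that a curve is a Mannheim partner exactly when \eqref{mannheim pair equation} holds.

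\textbf{Item 1.} Set $f(s)=\kappa_\alpha(s)-\lambda\bigl[\kappa_\alpha(s)^2+\tau_\alpha(s)^2\bigr]$. By \eqref{kappa at t} and \eqref{torsion at t}, $\kappa_\beta(t)=\kappa_\alpha(s(t))$ and $\tau_\beta(t)=\tau_\alpha(s(t))$. Hence $f\circ s$ is exactly $\kappa_\beta-\lambda(\kappa_\beta^2+\tau_\beta^2)$, and $s(t)$ is a diffeomorphism because $ds/dt>0$. Proposition~\ref{proposicao reparametrizacao nula} then gives that $f\equiv0$ if and only if \eqref{Mannheim x geral} holds. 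Combined with the characterization (Mannheim $\Leftrightarrow f\equiv 0$), this proves item 1. Since the reparametrization $s(t)$ was arbitrary, the equation holds for every one of them.

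\textbf{Item 2.} Set $g(s^*)=\kappa_{\alpha^*}(s^*)\bigl(1+\lambda^2\tau_{\alpha^*}^2\bigr)-\lambda\,\dfrac{d\tau_{\alpha^*}}{ds^*}$, so that $g\equiv0$ is \eqref{mannheim pair equation}. Again $\kappa_{\beta^*}(x)=\kappa_{\alpha^*}(s^*(x))$ and $\tau_{\beta^*}(x)=\tau_{\alpha^*}(s^*(x))$. The chain rule gives
\[
\frac{d}{dx}\tau_{\beta^*}(x)=\frac{d\tau_{\alpha^*}}{ds^*}(s^*(x))\,\frac{ds^*}{dx}.
\]
Multiplying $g(s^*(x))$ by the positive factor $\dfrac{ds^*}{dx}$ and dividing by $1+\lambda^2\tau_{\beta^*}^2>0$ turns $g\circ s^*\equiv0$ into exactly \eqref{eq 2 mannheim pair em x}. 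Multiplication by a nowhere-zero function preserves identical vanishing. By Proposition~\ref{proposicao reparametrizacao nula}, $g\equiv 0$ if and only if \eqref{eq 2 mannheim pair em x} holds.

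The only delicate point is the converse in item 2, namely that \eqref{mannheim pair equation} alone forces $\alpha^*$ to be a Mannheim partner. I will cite this from \cite{Liu}. If a self-contained argument is needed, I would define $\alpha=\alpha^*+\lambda B_{\alpha^*}$ and $\tan\bm A=-\lambda\tau_{\alpha^*}$. I would then check with the Frenet equations \eqref{eq:frenet} that \eqref{mannheim pair equation} is precisely the condition making $\alpha'$ orthogonal to $B_{\alpha^*}$, so that $B_{\alpha^*}$ is the principal normal of $\alpha$.
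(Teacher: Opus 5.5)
Your argument is essentially the paper's proof. It uses the same function $f$, the invariance $\kappa_\beta=\kappa_\alpha\circ s$, $\tau_\beta=\tau_\alpha\circ s$ combined with Proposition~\ref{proposicao reparametrizacao nula}, the same chain-rule multiplication by $ds^*/dx>0$ in item~2, and the same appeal to Liu and Wang for the converse of item~2 (your extra factor $1+\lambda^2\tau_{\alpha^*}^2$ in $g$ is harmless).

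Your optional self-contained fallback, however, misidentifies the condition. With $\alpha=\alpha^*+\lambda B_{\alpha^*}$ one has $\frac{d\alpha}{ds^*}=T_{\alpha^*}-\lambda\tau_{\alpha^*}N_{\alpha^*}$, which is orthogonal to $B_{\alpha^*}$ automatically, so the Mannheim pair equation is not that condition. Write $T_\alpha=\cos\bm{A}\,T_{\alpha^*}+\sin\bm{A}\,N_{\alpha^*}$ with $\tan\bm{A}=-\lambda\tau_{\alpha^*}$. Then
$\frac{dT_\alpha}{ds^*}=\bigl(\kappa_{\alpha^*}+\frac{d\bm{A}}{ds^*}\bigr)\bigl(-\sin\bm{A}\,T_{\alpha^*}+\cos\bm{A}\,N_{\alpha^*}\bigr)+\tau_{\alpha^*}\sin\bm{A}\,B_{\alpha^*}$.
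The equation is exactly $\kappa_{\alpha^*}=-\frac{d\bm{A}}{ds^*}$, i.e.\ the vanishing of the first term. That is what makes $B_{\alpha^*}$ the principal normal of $\alpha$ (given $\tau_{\alpha^*}\neq0$).
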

\begin{proof}
\begin{enumerate}
    \item By the characterization of Mannheim curves \cite{Liu, OrbayKasap}, $\alpha(s)$ is a Mannheim curve with constant $\lambda$ if and only if $f\equiv0$, where $f(s)=\kappa_{\alpha}(s) -\lambda \bigl( \kappa_{\alpha}(s)^2+\tau_{\alpha}(s)^2\bigr).$ By Proposition~\ref{proposicao reparametrizacao nula}, $f(s)=0$ if and only if $f(s(t))=0$ for any reparametrization $s=s(t)$, i.e., 

$$\kappa_{\alpha}(s(t))=\lambda \bigl[ \kappa_{\alpha}(s(t))^2+\tau_{\alpha}(s(t))^2\bigr],$$
and this is \eqref{Mannheim x geral}, since $\kappa_\beta(t)=\kappa_\alpha(s(t))$ and $\tau_\beta(t)=\tau_\alpha(s(t))$.
    
\item By the characterization of Mannheim partners of Liu and Wang \cite{Liu} (the necessity is item~10 of the Corollary above), $\alpha^{*}(s^{*})$ is a Mannheim partner with constant $\lambda$ if and only if $f\equiv0$, where $f(s^{*})=\kappa_{\alpha^{*}}(s^{*})-\dfrac{\lambda\,\dfrac{d \tau_{\alpha^{*}}}{ds^{*}}(s^{*})}{1+\lambda^2 \tau_{\alpha^{*}}(s^{*})^2}.$ By Proposition~\ref{proposicao reparametrizacao nula}, $f(s^{*})=0$ if and only if $f(s^{*}(x))=0$ for any reparametrization $s^{*}=s^{*}(x)$, i.e., 
    \begin{equation}
    \kappa_{\alpha^{*}}(s^{*}(x))=\dfrac{\lambda\,\dfrac{d \tau_{\alpha^{*}}}{ds^{*}}(s^{*}(x))}{1+\lambda^2 \tau_{\alpha^{*}}(s^{*}(x))^2}. \end{equation}
    Multiplying both sides by \(\dfrac{ds^{*}}{dx}\) and using the chain rule, $\alpha^*(s^{*})$ is a Mannheim partner iff for any reparametrization $s^{*}=s^{*}(x)$, 
\begin{equation}
 \kappa_{\alpha^{*}}(s^{*}(x))\,\dfrac{ds^{*}}{dx}=\dfrac{\lambda\,\dfrac{d}{dx}\bigl(\tau_{\alpha^{*}}(s^{*}(x))\bigr)}{1+\lambda^2 \bigl[\tau_{\alpha^{*}}(s^{*}(x))\bigr]^2}, 
\end{equation} 
which is \eqref{eq 2 mannheim pair em x}.
\end{enumerate}
\end{proof}     

From one curve of the Mannheim pair, we obtain the other curve of the pair and its respective Frenet apparatus, taking as reference the parameter and the Frenet apparatus of the another curve. This is formalized in the following proposition.

\begin{proposition}\label{proposition of pair generic parameter}
Let $(\alpha(s),\alpha^{*}(s^{*}))$ be a Mannheim pair with $\kappa_{\alpha^*}\neq0$ and $\dfrac{ds^{*}}{ds}>0$. Then
\begin{enumerate}
    \item If $t$ is a generic parameter for the curve $\alpha$, with $\dfrac{ds}{dt}>0$, then
    \begin{enumerate}
    \item 
\begin{equation}\label{alpha* to s*os(x)}
\beta^*(t)= \beta(t) + \lambda N_\beta(t).    
\end{equation}
    \item 
    \begin{equation}\label{B*=-N to s(x)}
     B_{\beta^{*}}(t)=-N_{\beta}(t).   
    \end{equation}
    \item 
\begin{equation}\label{T* at s*sx}
T_{\beta^{*}}(t) = \dfrac{1}{\sqrt{1+\left(\dfrac{\kappa_{\beta}(t)}{\tau_{\beta}(t)}\right)^2}}\cdot T_\beta(t) + \dfrac{\kappa_{\beta}(t)}{\tau_{\beta}(t)}.\dfrac{1}{\sqrt{1+\left(\dfrac{\kappa_{\beta}(t)}{\tau_{\beta}(t)}\right)^2}}\cdot B_\beta(t).
\end{equation}

\item 
\begin{equation}\label{N* at s*sx}
N_{\beta^{*}}(t) = -\dfrac{\kappa_{\beta}(t)}{\tau_{\beta}(t)}.\dfrac{1}{\sqrt{1+\left(\dfrac{\kappa_{\beta}(t)}{\tau_{\beta}(t)}\right)^2}}\cdot T_\beta(t)+\dfrac{1}{\sqrt{1+\left(\dfrac{\kappa_{\beta}(t)}{\tau_{\beta}(t)}\right)^2}}\cdot B_\beta(t).  
\end{equation}
\item \begin{equation}
\tau_{\beta^{*}}(t)=\dfrac{1}{\lambda} \dfrac{\kappa_\beta(t)}{\tau_\beta(t)}.    
\end{equation}
\item 
\begin{equation}\label{kappa* e tau e kappa com t}
\kappa_{\beta^{*}}(t)=
\dfrac{\dfrac{d}{dt}\left(\dfrac{\kappa_{\beta}(t)}{\tau_{\beta}(t)}\right)}{\dfrac{ds}{dt}.\sqrt{1+\left(\dfrac{\kappa_{\beta}(t)}{\tau_{\beta}(t)}\right)^2}}.
\end{equation}
\end{enumerate}

 \item If $x$ is a generic parameter for the curve $\alpha^{*}$, with $\dfrac{ds^{*}}{dx}>0$, then
 \begin{enumerate}
 \item \begin{equation}\label{eq:alpha_x 2}
\beta(x)=\beta^{*}(x)+\lambda B_{\beta^{*}}(x).
\end{equation}
     \item  \begin{equation}\label{N=-B* to s*(x)}
     N_{\beta}(x)=-B_{\beta^{*}}(x).   
    \end{equation}
     \item 
    \begin{equation}\label{eq:T_alpha st*}
        T_\beta(x)=\dfrac{1}{\sqrt{1+\lambda^2\tau_{\beta^*}(x)^2}}\cdot T_{\beta^*}(x) - \dfrac{\lambda\,\tau_{\beta^*}(x)}{\sqrt{1+\lambda^2\tau_{\beta^*}(x)^2}}\cdot N_{\beta^*}(x).
        \end{equation}         
     \item 
     \begin{equation}\label{eq:B_alpha st*}
         B_\beta(x)=\dfrac{\lambda\,\tau_{\beta^*}(x)}{\sqrt{1+\lambda^2\tau_{\beta^*}(x)^2}}\cdot T_{\beta^*}(x) + \dfrac{1}{\sqrt{1+\lambda^2\tau_{\beta^*}(x)^2}}\cdot N_{\beta^*}(x).
         \end{equation}
     \item \begin{equation}
         \tau_{\beta}(x)=\dfrac{\tau_{\beta^{*}}(x)}{1+\lambda^2 \tau_{\beta^{*}}(x)^2} 
     \end{equation}
     \item \begin{equation}\label{kappa kappa* e tau* x}
\kappa_{\beta}(x)=\dfrac{\lambda \tau_{\beta^{*}}(x)^2}{1+\lambda^2 \tau_{\beta^{*}}(x)^2}.
\end{equation}
 \end{enumerate}
 \end{enumerate}

\end{proposition}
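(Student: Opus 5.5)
The plan is to transport every identity of Theorem~\ref{lem:relations} and of the Corollary that follows it through the reparametrizations, using the invariance proposition for the Frenet apparatus (equations \eqref{tangente at t}--\eqref{torsion at t}). The key point is that all the unit vectors and the scalars $\kappa,\tau$ are invariant under orientation-preserving reparametrization. Only derivatives pick up chain-rule factors, and the only derivatives in the statement are the one in $\kappa_{\beta^*}$ and implicitly $\lambda$.

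For part (1), write $\beta(t)=\alpha(s(t))$ and $\beta^*(t)=\alpha^*(s^*(s(t)))$. Then $T_\beta(t)=T_\alpha(s(t))$, $N_\beta(t)=N_\alpha(s(t))$, and so on. Since $\frac{d}{dt}s^*(s(t))=\frac{ds^*}{ds}\frac{ds}{dt}>0$, the curve $\beta^*$ is an orientation-preserving reparametrization of $\alpha^*$, so $T_{\beta^*}(t)=T_{\alpha^*}(s^*(s(t)))$ and likewise for the other quantities. The individual items then follow by substitution:
\begin{itemize}
\item Items (a) and (b) come from \eqref{eq:alpha_star} and \eqref{fundamental hypotesis Mannheim Pair} evaluated at $s=s(t)$.
\item Items (c) and (d) come from \eqref{eq:T_star} and \eqref{eq:N_star}, with $\cos\bm A$ and $\sin\bm A$ replaced via \eqref{cos a s sin a s}. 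The sign $-\sin\bm A=+\frac{\kappa}{\tau}\frac{1}{\sqrt{1+(\kappa/\tau)^2}}$ gives exactly the displayed coefficients.
\item Item (e) is \eqref{tau* e tau e kappa}.
\end{itemize}

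For (f), I would start from \eqref{kappa* e tau e kappa} with $r=\kappa_\alpha/\tau_\alpha$. The chain rule gives $\frac{dr}{ds^*}=\frac{d}{dt}r(s(t))\cdot\frac{dt}{ds}\cdot\frac{ds}{ds^*}$. By \eqref{ds ds*}, $\frac{ds}{ds^*}=\sqrt{1+r^2}$, so
\[
\kappa_{\beta^*}=\frac{(dr/dt)\sqrt{1+r^2}}{(ds/dt)(1+r^2)}=\frac{dr/dt}{(ds/dt)\sqrt{1+r^2}},
\]
which is \eqref{kappa* e tau e kappa com t}. This bookkeeping of chain-rule factors is the only step where an error could creep in, so I would check it carefully. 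In particular, I would confirm that $\frac{ds}{ds^*}$ is evaluated at the corresponding point and that the positivity assumptions make all square roots equal to the positive branch.

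Part (2) is symmetric. Set $\beta^*(x)=\alpha^*(s^*(x))$ and $\beta(x)=\alpha(s(s^*(x)))$; this is orientation preserving because $\frac{ds}{ds^*}=\sec\bm A>0$. The items then follow:
\begin{itemize}
\item Items (a) and (b) come from \eqref{eq:alpha} and \eqref{fundamental hypotesis Mannheim Pair}.
\item Items (c) and (d) come from \eqref{eq:T_alpha} and \eqref{eq:B_alpha}, with $\cos\bm a,\sin\bm a$ substituted from \eqref{cos A s* sin A s*}. Using $\sin\bm a=-\lambda\tau^*/\sqrt{1+\lambda^2\tau^{*2}}$ gives the stated signs.
\item Items (e) and (f) come from \eqref{tau kappa* e tau*} and \eqref{kappa kappa* e tau*}.
\end{itemize}
No derivatives appear in part (2), so it is pure substitution together with the invariance proposition and Proposition~\ref{proposicao reparametrizacao nula}, which lets identities in $s$ or $s^*$ be read as identities in $t$ or $x$.
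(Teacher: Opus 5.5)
Your proposal is correct and follows the paper's proof. Like the paper, you evaluate the relations of Theorem~\ref{lem:relations} and its Corollary at $s=s(t)$ (resp.\ $s^*=s^*(x)$) and transfer them through the reparametrization invariance of the Frenet apparatus; for (1f) you use the same chain rule $\frac{d}{ds^*}=\frac{ds}{ds^*}\frac{dt}{ds}\frac{d}{dt}$ with $\frac{ds}{ds^*}=\sqrt{1+r^2}$. You also check explicitly that $t\mapsto s^*(s(t))$ and $x\mapsto s(s^*(x))$ have positive derivative, so the invariance proposition really applies to $\beta^*$ and $\beta$, which the paper leaves implicit.
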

\begin{proof}
\begin{enumerate}
    \item 
    \begin{enumerate}
    \item From equation \eqref{eq:alpha_star} at $s=s(t)$
$$ \alpha^*(s^*(s(t))) = \alpha(s(t)) + \lambda N_\alpha(s(t)),$$
and by equation \eqref{normal at t}
$$ \beta^{*}(t) = \beta(t) + \lambda N_\beta(t).$$
    \item From equation \eqref{fundamental hypotesis Mannheim Pair} at $s=s(t)$
    $$B_{\alpha^{*}}(s^{*}(s(t)))=-N_{\alpha}(s(t)),$$
    and by equation \eqref{binormal at t}
     $$B_{\beta^{*}}(t)=-N_{\beta}(t).$$
    \item Let $r=\dfrac{\kappa_{\alpha}(s(t))}{\tau_{\alpha}(s(t))}$. From equations \eqref{eq:T_star}, \eqref{cos a s sin a s} at $s=s(t)$, since $\cos\bm A=\dfrac{1}{\sqrt{1+r^2}}$ and $\sin\bm A=-\dfrac{r}{\sqrt{1+r^2}}$,
$$T_{\alpha^*}(s^{*}(s(t))) = \cos\bm A\cdot T_\alpha(s(t))-\sin\bm A\cdot B_\alpha(s(t)) = \dfrac{1}{\sqrt{1+r^2}}\cdot T_\alpha(s(t))+ \dfrac{r}{\sqrt{1+r^2}}\cdot B_\alpha(s(t)),$$
and by equations \eqref{tangente at t}, \eqref{kappa at t}, \eqref{normal at t},  \eqref{binormal at t}, \eqref{torsion at t} we obtain \eqref{T* at s*sx}.
    \item Similarly, from equations \eqref{eq:N_star}, \eqref{cos a s sin a s} at $s=s(t)$,
$$N_{\alpha^*}(s^{*}(s(t))) = \sin\bm A\cdot T_\alpha(s(t))+\cos\bm A\cdot B_\alpha(s(t)) = -\dfrac{r}{\sqrt{1+r^2}}\cdot T_\alpha(s(t))+ \dfrac{1}{\sqrt{1+r^2}}\cdot B_\alpha(s(t)),$$
which gives \eqref{N* at s*sx}.
    \item From equation \eqref{tau* e tau e kappa} at  $s=s(t)$. 
    \item With $r$ as above, $\dfrac{d}{ds^*}=\dfrac{ds}{ds^*}\dfrac{dt}{ds}\dfrac{d}{dt}$, and $\dfrac{ds}{ds^*}=\sqrt{1+r^2}$ by \eqref{ds ds*}. Then \eqref{kappa* e tau e kappa} at $s=s(t)$ gives
$$\kappa_{\beta^*}(t)=\dfrac{\sqrt{1+r^2}\,\dfrac{1}{ds/dt}\,r'(t)}{1+r^2}=\dfrac{r'(t)}{\dfrac{ds}{dt}\sqrt{1+r^2}}.$$
    \end{enumerate}
    \item 
\begin{enumerate}
        \item  From equation \eqref{eq:alpha} at $s^{*}=s^{*}(x)$ 
$$\alpha(s(s^{*}(x)))=\alpha^*(s^{*}(x))+\lambda B_{\alpha^{*}}(s^{*}(x)),$$
therefore,          
$$\beta(x)=\beta^*(x)+\lambda B_{\beta^{*}}(x).$$
\item From equation \eqref{fundamental hypotesis Mannheim Pair} at $s^{*}=s^{*}(x)$
    $$B_{\alpha^{*}}(s^{*}(x))=-N_{\alpha}(s(s^{*}(x))),$$
    therefore 
     $$B_{\beta^{*}}(x)=-N_{\beta}(x).$$
\item By \eqref{eq:T_alpha} at $s=s(s^*(x))$, with $\bm a(s(s^*))=\bm A(s^*)$, $\cos\bm A=\dfrac{1}{\sqrt{1+\lambda^2\tau_{\alpha^*}^2}}$ and $\sin\bm A=-\dfrac{\lambda\tau_{\alpha^*}}{\sqrt{1+\lambda^2\tau_{\alpha^*}^2}}$ (equations \eqref{cos A s* sin A s*}),
   $$T_\alpha(s(s^{*}(x)))=\cos\mathbf{A}(s^{*}(x))\cdot T_{\alpha^*}(s^{*}(x)) + \sin\mathbf{A}(s^{*}(x))\cdot N_{\alpha^*}(s^{*}(x)),$$ 
and by \eqref{tangente at t}, \eqref{normal at t}, \eqref{torsion at t} we obtain \eqref{eq:T_alpha st*}.
\item  Similarly from equations \eqref{eq:B_alpha}, \eqref{cos A s* sin A s*} at $s^{*}=s^{*}(x)$,
$$B_\alpha(s(s^{*}(x)))=-\sin\bm A\cdot T_{\alpha^*}(s^{*}(x))+\cos\bm A\cdot N_{\alpha^*}(s^{*}(x)),$$
which gives \eqref{eq:B_alpha st*}. 
\item From equation \eqref{tau kappa* e tau*} at $s^{*}=s^{*}(x)$.
\item From equation \eqref{kappa kappa* e tau*} at $s^{*}=s^{*}(x)$.
\end{enumerate}
\end{enumerate}    
\end{proof}

\section{Auxiliary angle function to Mannheim condition}\label{sec:angular_mannheim}

Our goal is to choose a convenient parameter $t$ and find functions $\Theta(t)$ and $\Phi(t)$ such that $\beta$ satisfies the Mannheim condition and the resulting integrals are elementary.    

Define an auxiliary angle $\delta(t)\in(0,\pi)$ by
\begin{equation}\label{eq:cot_delta}
\cot\delta(t) = \dfrac{\Theta'(t)\cos\Phi(t)}{\Phi'(t)}.
\end{equation}

Suppose $\beta$ is not a helix, $\dfrac{ds}{dt} >0$ and $\Phi'(t)>0$.

We have $$\sqrt{\Theta'^2\cos^2\Phi + \Phi'^2} = \Phi'(t)\csc\delta(t).$$

And
\begin{equation}
\left[ \arctan\big( \dfrac{\Phi'(t)}{\Theta'(t)\cos\Phi(t)} \big)\right]' = \delta'(t).
\end{equation}

Thus, by equations \eqref{eq:kappa_x}, \eqref{eq:tau_x}, the curvature and torsion become  
\[
\kappa_\beta(t) = \frac{\Phi'(t)\csc\delta(t)}{\dfrac{ds}{dt}},\]
\[\tau_\beta(t) = \frac{1}{\dfrac{ds}{dt}} \big[ \Phi'(t)\tan\Phi(t) \cot\delta(t) + \delta'(t) \big].\]

Imposing the Mannheim condition \eqref{mannheim equation} yields
\[
\frac{1}{\left(\dfrac{ds}{dt}\right)^2} \left[ \Phi'^2\csc^2\delta + \big( \Phi'\tan\Phi\cot\delta + \delta' \big)^2 \right] = \frac{1}{\lambda} \frac{\Phi'\csc\delta}{ds/dt}.\]

Solving for $\dfrac{ds}{dt}$, we obtain
\[\frac{ds}{dt} = \lambda \csc\delta(t)\Phi'(t) \left[ 1 + \left(\kappa^{\smash[b]{T_{\beta}}}_{g}(t)\right)^2 \right],\]

where
\[
 \kappa^{\smash[b]{T_{\beta}}}_{g}(t) = \tan\Phi(t) \cos\delta(t) +\dfrac{\sin\delta(t) \delta'(t)}{\Phi'(t)}.\]
Note that $\kappa^{\smash[b]{T_{\beta}}}_{g}=\tau_\beta/\kappa_\beta$, so by \eqref{geodesic curvature and quotient} it is the geodesic curvature of the tangent indicatrix.

Consequently, the curvature and torsion become
\[
\kappa_\beta(t)= \frac{1}{\lambda \left[1 + \left(\kappa^{\smash[b]{T_{\beta}}}_{g}(t)\right)^2 \right]},\ 
\tau_\beta(t) = \frac{\kappa^{\smash[b]{T_{\beta}}}_{g}(t) }{\lambda \left[1+ \left(\kappa^{\smash[b]{T_{\beta}}}_{g}(t)\right)^2\right]}. 
\]

The function $\Theta(t)$ is recovered from \eqref{eq:cot_delta}:
\[
\Theta(t) = \int \Phi'(t)\sec\Phi(t) \cot\delta(t) \, dt+m.\]

Thus, we can state the following theorem, which summarizes the general parametrization of a Mannheim curve obtained by this method.
 
\begin{theorem}\label{thm:general_mannheim}
Let $\lambda>0$ and $m$ be real constants, and let $\delta(t)\in(0,\pi)$ and $\Phi(t)\in(-\frac\pi2,\frac\pi2)$, with $\Phi'(t)>0$ (the case $\Phi'<0$ is analogous), be differentiable functions. Define  
\begin{equation}\label{F(x) 1}
F(t) = \tan\Phi(t) \cos\delta(t) +\dfrac{\sin\delta(t) \delta'(t)}{\Phi'(t)},  
\end{equation}
\begin{equation}\label{Theta(x) 1}
\Theta(t) = \int \Phi'(t)\sec\Phi(t) \cot\delta(t) \, dt+m,   
\end{equation}
\begin{equation}\label{dsdx 1}
\frac{ds}{dt} = \lambda \csc\delta(t)\Phi'(t) \left( 1 + F(t)^2 \right),
\end{equation}
and assume $F(t)\neq0$.

Then the curve $\beta$ parametrized by $t$ as
\begin{equation}\label{d alpha s}
\frac{d\beta}{dt}(t) = 
\dfrac{ds}{dt}\begin{bmatrix}
\cos \Theta(t) \cos\Phi(t)\\[4pt]
\sin \Theta(t) \cos\Phi(t)\\[4pt]
\sin\Phi(t)
\end{bmatrix},
\end{equation}
is a Mannheim curve, with partner $\beta+\lambda N_\beta$, and with curvature and torsion given by
\begin{equation}\label{k e tau}
\kappa_\beta(t) = \frac{1}{\lambda\big(1+F(t)^2\big)},\qquad
\tau_\beta(t) = \frac{F(t)}{\lambda\big(1+F(t)^2\big)},\quad \text{with}\  F(t)=\kappa^{\smash[b]{T_{\beta}}}_{g}(t). 
\end{equation}

Conversely, let $\beta$ be a Mannheim curve that is not a helix. Then, near each point, there are a coordinate system and functions $\delta(t)\in(0,\pi)$, $\Phi(t)$ with $\Phi'(t)\neq0$, such that \eqref{F(x) 1}, \eqref{Theta(x) 1},  \eqref{dsdx 1}, \eqref{d alpha s} and \eqref{k e tau} are satisfied.
\end{theorem}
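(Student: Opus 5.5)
The direct part is a computation with Proposition~\ref{thm:arbitrary_param}. Given $\delta$, $\Phi$, $\lambda$, $m$, I define $\Theta$ by \eqref{Theta(x) 1}. Then $\Theta'=\Phi'\sec\Phi\cot\delta$, so \eqref{eq:cot_delta} holds. The quantity $D=ds/dt$ in \eqref{dsdx 1} is positive because $\csc\delta>0$ and $\Phi'>0$. Integrating \eqref{d alpha s} gives a regular curve $\beta$ with $T_\beta$ as in \eqref{eq:T_x}. Next I compute the two ingredients of \eqref{eq:kappa_x} and \eqref{eq:tau_x}:
\[
\sqrt{\Theta'^2\cos^2\Phi+\Phi'^2}=\Phi'\sqrt{\cot^2\delta+1}=\Phi'\csc\delta ,
\]
using $\sin\delta>0$. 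Also $\Phi'/(\Theta'\cos\Phi)=\tan\delta$, so the derivative of the arctan term equals $\delta'$. At points where $\delta=\pi/2$ we have $\Theta'=0$; there I would instead use $\tau_\beta=-\langle B_\beta',N_\beta\rangle/D$ directly, or argue by continuity, since the final expression is smooth. This gives
\[
\kappa_\beta=\frac{\Phi'\csc\delta}{D},\qquad \tau_\beta=\frac{\Phi'\tan\Phi\cot\delta+\delta'}{D}.
\]
Hence $\tau_\beta/\kappa_\beta=\tan\Phi\cos\delta+\sin\delta\,\delta'/\Phi'=F$. Substituting \eqref{dsdx 1} yields \eqref{k e tau}. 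The identity $\kappa_\beta^2+\tau_\beta^2=(1+F^2)/(\lambda^2(1+F^2)^2)=\kappa_\beta/\lambda$ then holds. By the Mannheim-curve characterization \eqref{Mannheim x geral}, $\beta$ is a Mannheim curve, and by \eqref{alpha* to s*os(x)} its partner is $\beta+\lambda N_\beta$. The hypothesis $F\ne0$ ensures $\tau_\beta\neq0$, so the partner formulas of Proposition~\ref{proposition of pair generic parameter} make sense. The identity $F=\kappa_g^{T_\beta}$ is \eqref{geodesic curvature and quotient}.

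For the converse, let $\beta$ be a Mannheim curve that is not a helix, with $\kappa_\beta>0$. Since $\tau/\kappa$ is not constant, its derivative is nonzero at some point, and the statement is local near such points. Near a point $t_0$ I will choose a rotation so that $T_\beta(t_0)\neq\pm e_3$ and $\Phi'(t_0)\neq0$. The condition $\Phi'(t_0)\ne0$ means $\langle T_\beta'(t_0),e_3\rangle\neq0$, i.e.\ $\langle N_\beta(t_0),e_3\rangle\ne0$. This holds for a generic choice of $e_3$: it excludes only the great circle orthogonal to $N_\beta(t_0)$ and the points $\pm T_\beta(t_0)$. By continuity $\Phi'\neq0$ on a neighborhood. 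Reversing $e_3$ if necessary (a proper rotation by $\pi$ about $e_1$), I may take $\Phi'>0$.

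Then I define $\delta\in(0,\pi)$ as the arccot of $\Theta'\cos\Phi/\Phi'$, which is well defined and smooth because $\Phi'\neq0$. With this $\delta$, \eqref{Theta(x) 1} holds for a suitable constant $m$, and \eqref{d alpha s} is just \eqref{eq:deriv_comp}. The computation above, now run backwards, gives $\kappa_\beta=\Phi'\csc\delta/D$ and $\tau_\beta/\kappa_\beta=F$. The Mannheim equation $\kappa_\beta=\lambda(\kappa_\beta^2+\tau_\beta^2)$ then reads $1=\lambda\kappa_\beta(1+F^2)$. This gives \eqref{k e tau}, and \eqref{dsdx 1} follows on solving for $D$.

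The main obstacle is bookkeeping rather than a conceptual difficulty. The points to watch are:
\begin{itemize}
\item the sign of $\lambda$: the Mannheim equation forces $\lambda>0$ when $\kappa>0$;
\item the points where $\Theta'=0$, at which \eqref{eq:tau_x} is not literally applicable;
\item arranging $\Phi'\neq0$ by the choice of coordinates.
\end{itemize}
For the second point I plan to verify the torsion formula via $\psi=\delta$ in \eqref{taualpha}, choosing the continuous branch of the angle. The relation $\tan\psi=\Phi'/(\Theta'\cos\Phi)=\tan\delta$ identifies $\psi$ with $\delta$ up to a constant, even across $\Theta'=0$.
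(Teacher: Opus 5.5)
Your proposal is correct and follows the paper's own route: the same auxiliary angle $\delta$ with $\cot\delta=\Theta'\cos\Phi/\Phi'$, the identities $\sqrt{\Theta'^2\cos^2\Phi+\Phi'^2}=\Phi'\csc\delta$ and $[\arctan(\Phi'/(\Theta'\cos\Phi))]'=\delta'$, and then solving the Mannheim equation for $ds/dt$. The paper carries out this computation only once, in the converse direction, before stating the theorem. Your extra care fills in details the paper leaves implicit: handling $\Theta'=0$ via $N_\beta=\cos\delta\,e_\theta+\sin\delta\,e_\phi$, choosing $e_3$ with $\langle N_\beta(t_0),e_3\rangle\neq0$, and noting that $\lambda>0$. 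One small point: that coordinate choice works at every $t_0$, so your detour through points where $(\tau/\kappa)'\neq0$ is unnecessary.
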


\begin{remark}We can make one choice: $\Phi(t)=\phi(s(t))=t$ ($t$ as latitude parameter), and establish the corresponding forms of the equations of a Mannheim curve given by this theorem by function $\delta(t)$. Some suggestions to obtain some other forms of the equations (according to convenience) are to take one of the following possibilities: $\Theta(t)=\theta(s(t))=t$, $s(t)=t$, $s(t)={\kappa_{\beta}}^{-1}(t)$, $s(t)={\tau_{\beta}}^{-1}(t)$, $s(t)={\sigma_{0}}^{-1}(t)$, $s(t)={\sigma_{1}}^{-1}(t)$, where $\sigma_{0}$ and $\sigma_{1}$ are given by \eqref{generalhelix_equation} and \eqref{slanthelix_equation}.
\end{remark}

\subsection{Special Family and new elementary example}

\begin{example}We set $\Phi(t)=t$ and choose the function $\delta(t)$ such that
\begin{equation}\label{eq:choice_delta}
\cot\delta(t) = \textbf{a}\cdot \cos(t).
\end{equation}
This choice is motivated by the fact that it makes $\Theta(t)$ linear.

\textit{Basic functions}: 

From \eqref{eq:choice_delta}, we obtain:
\begin{align}
\csc\delta &= \sqrt{1 +{\textbf{a}}^2\cos^2(t)}, & \sin\delta &= \frac{1}{\sqrt{1 +{\textbf{a}}^2\cos^2(t)}}, &\cos\delta &= \frac{{\textbf{a}}\cdot\cos(t)}{\sqrt{1 + {\textbf{a}}^2\cos^2(t)}}, \\
&&&\delta'(t) = \frac{{\textbf{a}}\cdot\sin(t)}{1 + {\textbf{a}}^2\cos^2(t)}. & \label{eq:delta_prime}
\end{align}

\textit{The function $\Theta(t)$:}

Using \eqref{Theta(x) 1}, 
\begin{equation}
\Theta(t) = \int \sec(t) \cdot \textbf{a}\cos(t) \, dt = \int \textbf{a}\, dt = \textbf{a}t+\textbf{b}.
\end{equation}

\textit{The function} $\kappa^{\smash[b]{T_{\beta}}}_{g}(t)$ \textit{and} $\dfrac{ds}{dt}$:

Substituting into \eqref{F(x) 1}
\begin{align}
\kappa^{\smash[b]{T_{\beta}}}_{g}(t)&= \frac{\textbf{a}\sin(t) (2 + {\textbf{a}}^2\cos^2(t))}{\left(1 + {\textbf{a}}^2\cos^2(t)\right)^{3/2}}. \label{eq:F_explicit}
\end{align}

Then, from \eqref{dsdx 1}
\begin{align}
\frac{ds}{dt} &= \lambda  \sqrt{1 + {\textbf{a}}^2\cos^2(t)} \left[ 1 + \frac{{\textbf{a}}^2\sin^2(t) \left(2+{\textbf{a}}^2\cos^2(t)\right)^2}{\left(1+{\textbf{a}}^2\cos^2(t)\right)^3} \right] . \label{eq:ds_dphi_explicit}
\end{align}

\textit{Curvature and torsion of} $\beta$:

From \eqref{k e tau} 

\begin{equation}
\kappa_\beta(t) = \frac{1}{\lambda}\, . \, \frac{\left(1 + {\textbf{a}}^2\cos^2 (t)\right)^3}{\left(1 + {\textbf{a}}^2\cos^2 (t)\right)^3 + {\textbf{a}}^2\sin^2 (t)\,\left(2 + {\textbf{a}}^2\cos^2 (t)\right)^2},    
\end{equation}
\begin{equation}
\tau_\beta(t) = \frac{1}{\lambda}\, . \, \frac{{\textbf{a}}\sin (t)\,\left(2+{\textbf{a}}^2\cos^2(t)\right)\,\left(1+{\textbf{a}}^2\cos^2 (t)\right)^{3/2}}{\left(1 + {\textbf{a}}^2\cos^2 (t)\right)^3 + {\textbf{a}}^2\sin^2(t)\,\left(2 + {\textbf{a}}^2\cos^2 (t)\right)^2}.
\end{equation}

\textit{Coordinates of $\beta$:}

The derivative of the curve with respect to $t$ is given by \eqref{eq:deriv_comp}:
\begin{equation}\label{eq:dalpha_dphi}
\frac{d\beta}{dt}(t) = \frac{ds}{dt} \big( \cos({\textbf{a}}t+{\textbf{b}})\cos(t),\; \sin({\textbf{a}}t+{\textbf{b}})\cos(t),\; \sin(t) \big).
\end{equation}
\end{example}

\begin{remark}
The choice ${\textbf{a}}=2$ belongs to the family ${\textbf{a}}=2k$, $k\in\mathbb Z\setminus\{0\}$, for which the coordinates of $\beta$ are elementary functions. Let $R=\sqrt{1+{\textbf{a}}^2\cos^2(t)}$. Then
\[
\frac{ds}{dt}=\lambda\left[\frac{{\textbf{a}}^2-1}{R}+\frac{2{\textbf{a}}^2+1}{R^3}+\frac{{\textbf{a}}^2+1}{R^5}\right].
\]
For ${\textbf{a}}=2k$, $\cos(2kt)$ is a polynomial in $\sin t$ and $\sin(2kt)=\sin t\,U(\cos t)$ with $U$ a polynomial. Taking $u=\sin t$ in the terms with $\cos(2kt)$, and $u=\cos t$ in the terms with $\sin(2kt)$ and in $\sin t\,dt$, the primitives of the components of $\frac{d\beta}{dt}$ reduce to integrals of polynomials in $u$ times powers of the single square root $R$, which are elementary. For ${\textbf{a}}$ odd, the terms with $\cos({\textbf{a}}t)$ produce a second square root $\sqrt{1-u^2}$ together with $R$, and one obtains in general elliptic integrals in the first two coordinates (the third one is elementary for every ${\textbf{a}}$). Moreover, for ${\textbf{a}}=2k$ we have $\frac{d\beta}{dt}(t+\pi)=-\frac{d\beta}{dt}(t)$, so $\beta$ is a closed curve ($\beta(t+2\pi)=\beta(t)$). Since $\tau_\beta=0$ exactly at $t\in\pi\mathbb Z$, it is a Mannheim curve on $(0,\pi)$ and on $(\pi,2\pi)$.
\end{remark}

\subsection*{Case \texorpdfstring{${\mathbf{a}}=2$ and ${\mathbf{b}}=0$}{a=2 and b=0}:}\label{exemplo 1}

\textit{Basic functions}: 

From $\cot\delta = 2\cos t$, we obtain:
\begin{align}
\csc\delta &= \sqrt{1 + 4\cos^2 t}, &
\sin\delta &= \frac{1}{\sqrt{1 + 4\cos^2 t}}, &
\cos\delta &= \frac{2\cos t}{\sqrt{1 + 4\cos^2 t}}, \\
&&\delta'(t) &= \frac{2\sin t}{1 + 4\cos^2 t}. \label{eq:delta_prime_2}
\end{align}

\textit{The function $\Theta(t)$:}

Using $\Theta(t)=\int\sec t\cdot 2\cos t\,dt = \int 2\,dt$ (with ${\textbf{b}}=0$):
\begin{equation}
\Theta(t)=2t.
\end{equation}

\textit{The function $F(t)$ and $\dfrac{ds}{dt}$:}

Substituting into $F(t)$ yields
\begin{align}
F(t)&= \frac{4\sin(t) (1 + 2\cos^2(t))}{\left(1 + 4\cos^2(t)\right)^{3/2}}. \label{eq:F_explicit_4}
\end{align}

Then
\begin{align}
\frac{ds}{dt} &= \lambda\sqrt{1 + 4\cos^2(t)} \left[ 1 + \frac{16\sin^2(t) \left(1+2\cos^2(t)\right)^2}{\left(1+4\cos^2(t)\right)^3} \right] . \label{eq:ds_dphi_explicit_4}
\end{align}

\textit{Curvature and torsion of $\beta$:}
\begin{align}
\kappa_\beta(t) &= \frac{(1+4\cos^2(t))^3}{\lambda \left(48\cos^4(t) + 60\cos^2(t) + 17\right)}, \label{eq:kappa_explicit}\\
\tau_\beta(t) &= \frac{4\sin(t) (1+2\cos^2(t)) (1+4\cos^2(t))^{3/2}}{\lambda \left(48\cos^4(t) + 60\cos^2(t) + 17\right)}. \label{eq:tau_explicit}
\end{align}

\textit{The derivative with respect to $t$ is:}

\begin{equation}
\frac{d\beta }{dt}(t) = \frac{ds}{dt}\bigl(\cos(2t)\cos t,\; \sin(2t)\cos t,\; \sin t\bigr).    
\end{equation}

\textit{Coordinates of $\beta$:}

Integrating each component (with $\lambda$ as a factor) we obtain the coordinates in closed form. Thus, the Mannheim curve $\beta$ parametrized by $t$ is:
\begin{equation}\label{eq:alpha_explicit}
\beta(t) = \lambda \begin{bmatrix} \frac{\sin(t)\,(240\cos^4(t) - 72\cos^2(t) -43)}{20(1 + 4\cos^2(t))^{3/2}} + \frac{15}{8}\arcsin\!\left( \frac{2\sin(t)}{\sqrt{5}} \right) \\ \\ \frac{\cos(t)\,(-144\cos^4(t) + 104\cos^2(t) + 45)}{12(1 + 4\cos^2(t))^{3/2}} - \frac{15}{8}\arcsinh(2\cos(t))  \\ \\ -\frac{2}{3}\frac{\cos(t)(21 + 74\cos^2(t))}{(1 + 4\cos^2(t))^{3/2}} - \frac{3}{2}\arcsinh(2\cos(t))  \end{bmatrix}.
\end{equation}

 \textit{Frenet frame of} $\beta$:

Using \eqref{eq:T_x}--\eqref{eq:B_x} with $\Theta(t)=2t$, $\Phi(t)=t$, we obtain:
\begin{align}
T_\beta(t) &= \big( \cos(2t)\cos(t),\; \sin(2t)\cos(t),\; \sin(t) \big), \label{eq:T_explicit}\\
N_\beta(t) &= \frac{1}{\sqrt{1+4\cos^2(t)}} \Big( (1-6\cos^2(t))\sin(t),\; 2(3\cos^2(t)-2)\cos(t),\; \cos(t) \Big), \label{eq:N_explicit}\\
B_\beta(t) &= \frac{1}{\sqrt{1+4\cos^2(t)}} \Big( 4\sin^3(t)\cos(t),\; 1-6\cos^2(t)+4\cos^4(t),\; 2\cos^2(t)\Big).
\label{eq:B_explicit}
\end{align}

\textit {The Mannheim partner curve $\beta^{*}=\alpha^{*}\circ s^{*}\circ s$ :}

Combining equation \eqref{alpha* to s*os(x)} of Proposition~\ref{proposition of pair generic parameter} with \eqref{eq:alpha_explicit} and \eqref{eq:N_explicit} gives the explicit coordinates of the partner curve at the corresponding point:

\begin{equation}\label{eq:alpha_star_explicit}
\beta^{*}(t) =  \lambda \begin{bmatrix} 
-\frac{\sin (t)\,\bigl(240\cos^4 (t)+112\cos^2 (t)+23\bigr)}
{20(1+4\cos^2 (t))^{3/2}}+\frac{15}{8}\arcsin\!\left(\frac{2\sin(t)}{\sqrt{5}}\right)\\ \\ 
\frac{\cos(t)\left(144\cos^4(t)-16\cos^2(t)-3\right)}
{12(1+4\cos^2(t))^{3/2}}
-\frac{15}{8}\operatorname{arcsinh}(2\cos(t))
  \\  \\
-\frac{\cos(t)\bigl(39+136\cos^2(t)\bigr)}
{3(1+4\cos^2(t))^{3/2}}
-\frac{3}{2}\operatorname{arcsinh}(2\cos(t)) 
\end{bmatrix}.
\end{equation}

\textit{Frenet vectors of $\beta^{*}$:}

From Proposition~\ref{proposition of pair generic parameter}, we obtain:
\begin{equation}
T_{\beta^{*}}(t)=
\,
\begin{bmatrix}
\dfrac{12\sin(t)\,\cos^3(t)}{\sqrt{48\cos^4(t)+60\cos^2(t)+17}}
\\[0.4em]
\\[0.4em]
\dfrac{-(12\cos^4(t)-6\cos^2(t)-1)}{\sqrt{48\cos^4(t)+60\cos^2(t)+17}}
\\[0.4em]
\\[0.4em]
\dfrac{2(3\cos^2(t)+2)}{\sqrt{48\cos^4(t)+60\cos^2(t)+17}}
\end{bmatrix}.
\end{equation}
 \begin{equation}
N_{\beta^{*}}(t) =-
\begin{bmatrix}
\displaystyle
\frac{\cos(t)\,\bigl(48\cos^4(t)-6\cos^2(t)-17\bigr)}
{\sqrt{(1+4\cos^2(t))\bigl(48\cos^4(t)+60\cos^2(t)+17\bigr)}}
\\[2.5em]
\displaystyle
\frac{\sin(t)\,\bigl(48\cos^4(t)+18\cos^2(t)-4\bigr)}
{\sqrt{(1+4\cos^2(t))\bigl(48\cos^4(t)+60\cos^2(t)+17\bigr)}}
\\[2.5em]
\displaystyle
\frac{\sin(t)}
{\sqrt{(1+4\cos^2(t))\bigl(48\cos^4(t)+60\cos^2(t)+17\bigr)}}
\end{bmatrix}.
\end{equation}
\begin{equation}
\scriptstyle B_{\beta^{*}}(t)=-\dfrac{1}{\sqrt{1+4\cos^2(t)}}
\left(
(1-6\cos^2(t))\sin(t),\;
2(3\cos^2(t)-2)\cos(t),\;
\cos(t)
   \right).     
\end{equation}

\textit{Illustration:}
\begin{figure}[htbp]
\centering
\includegraphics[width=1.0\textwidth]{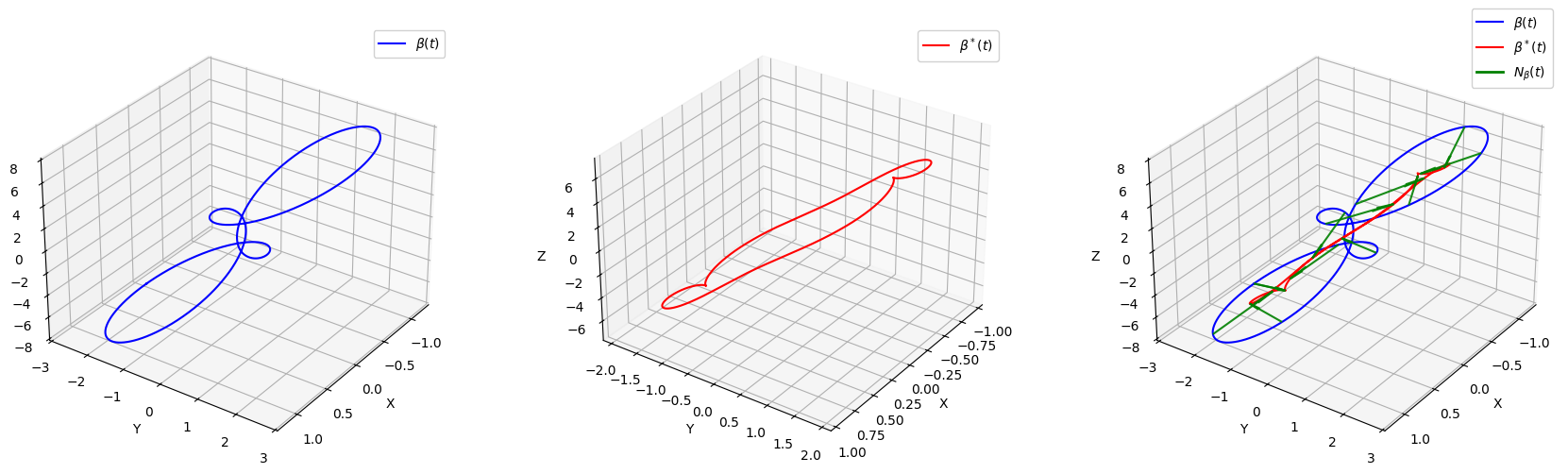}
\caption{Mannheim pair for ${\textbf{a}}=2$ and $\lambda=1$: original curve $\beta$ (blue) and its partner $\beta^*$ (red). Arrow (green) represents the principal normal $N_\beta$.}
\label{fig:mannheim_pair}
\end{figure}

\textit{Curvature, torsion and angle of the pair}:

From Proposition~\ref{proposition of pair generic parameter}, we obtain:
\begin{align}
\tau_{\beta^{*}}(t) & = \frac{(1+4\cos^2(t))^{3/2}}{4\lambda\sin(t)(1+2\cos^2(t))}, \label{eq:tau_star_explicit}\\
\kappa_{\beta^{*}}(t) &= \frac{3(1+4\cos^2(t))^3(3+2\cos^2(t))\,|\cot(t)|}{\lambda(1+2\cos^2(t))(48\cos^4(t)+60\cos^2(t)+17)^{3/2}}. \label{eq:kappa_star_explicit}
\end{align}

\textit{Non-helix and non-slant nature:} 

Using the explicit expressions for $\kappa_\beta$ and $\tau_\beta$ from \eqref{eq:kappa_explicit} and \eqref{eq:tau_explicit}, we compute
\begin{equation}
\kappa^{\smash[b]{T_{\beta}}}_{g}(t)=\frac{{\tau_\beta}(t)}{{\kappa_\beta}(t)} = \frac{4\sin(t)[1+2\cos^2(t)]}{[1+4\cos^2(t)]^{3/2}},
\end{equation}
which is clearly non-constant. From Theorem~\ref{helix characterization}, $\beta$ is not a helix.

There does not exist $\textbf{v}=(\bm{v}_{1},\bm{v}_{2},\bm{v}_{3})\in\mathbb{R}^3$ with $\|\textbf{v}\|=1$ and $d\in\mathbb{R}$ with $|\bm{d}|\leq 1$ such that $\langle N_{\beta}(t),\textbf{v}\rangle = \bm{d}$.

In fact, on the contrary,
\[
\langle N_{\beta}(t),\textbf{v}\rangle=\bm{d} \Longleftrightarrow
\bm{v}_{1}\cdot (1-6\cos^2(t))\sin(t) + \bm{v}_{2}\cdot2(3\cos^2(t)-2)\cos(t) + \bm{v}_{3}\cdot \cos(t) = \bm{d}\cdot\sqrt{1+4\cos^2(t)},
\]
this is not possible (no such constants exist). Then, for definition, $\beta$ is not a slant helix.

\section{Auxiliary angle function to Mannheim pair condition}\label{sec:angular_partner}

We can apply the same idea of the Mannheim condition to the curve $\alpha^{*}$ of a Mannheim pair with respect to a generic parameter $x$.
\begin{theorem}\label{thm:mannheim_partner_eucl}
Let \(\Phi^{*}(x)\) and \(\delta^{*}(x)\) be differentiable functions such that \({\Phi^{*}}'(x)\neq0\). Let \(\lambda\), \(\bm{p}\), and \(\bm{q}\) be constants. Define

\begin{equation}\label{Theta par}
\Theta^{*}(x)=\int {\Phi^{*}}'(x)\sec\Phi^{*}(x)\cot\delta^{*}(x)\,dx+\bm{q}, 
\end{equation}

\begin{equation}\label{ds* do par}
\frac{ds^{*}}{dx}(x)=\lambda\cot\!\left(\int{\Phi^{*}}'(x)\csc\delta^{*}(x)\,dx+\bm{p}\right)\Bigl({\Phi^{*}}'(x)\tan\Phi^{*}(x)\cot\delta^{*}(x)+{\delta^{*}}'(x)\Bigr).
\end{equation}
Then the curve \(\beta^{*}\) parametrized by $x$ as 
\begin{equation}\label{alpha* do par}
\frac{d\beta^{*}}{dx}(x)=\frac{ds^{*}}{dx}
\begin{bmatrix}
\cos\Theta^{*}(x)\cos\Phi^{*}(x)\\[4pt]
\sin\Theta^{*}(x)\cos\Phi^{*}(x)\\[4pt]
\sin\Phi^{*}(x)
\end{bmatrix},
\end{equation}
is a Mannheim pair curve with curvature and torsion given by
\begin{equation}\label{k e tau do par}
\kappa_{\beta^{*}}(x)=\frac{{\Phi^{*}}'(x)\csc\delta^{*}(x)}{\dfrac{ds^{*}}{dx}},\ 
\tau_{\beta^{*}}(x)=\frac{{\Phi^{*}}'(x)\tan\Phi^{*}(x)\cot\delta^{*}(x)+{\delta^{*}}'(x)}{\dfrac{ds^{*}}{dx}}.
\end{equation}
Conversely, every Mannheim pair curve is obtained in this way. That is, for every Mannheim pair curve \(\beta^{*}\), there exist \(\delta^{*}(x), \Phi^{*}(x)\) with \({\Phi^{*}}'(x)\neq0\), such that \eqref{Theta par}, \eqref{ds* do par},  \eqref{alpha* do par} and  \eqref{k e tau do par} are satisfied.
\end{theorem}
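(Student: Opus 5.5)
Introduce the auxiliary angle $\delta^*$ for $\beta^*$ exactly as in Section~\ref{sec:angular_mannheim}, namely $\cot\delta^*=\Theta^{*\prime}\cos\Phi^*/\Phi^{*\prime}$, and then reduce everything to the Mannheim pair equation \eqref{eq 2 mannheim pair em x} of the proposition in Section~\ref{sec:mannheim_x}. I will treat the case ${\Phi^*}'>0$; the case ${\Phi^*}'<0$ is symmetric after a sign change of $\delta^*$.

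\emph{Direct part.} Definition \eqref{Theta par} gives $\Theta^{*\prime}=\Phi^{*\prime}\sec\Phi^*\cot\delta^*$, so $\cot\delta^*$ is the auxiliary cotangent. Then Proposition~\ref{thm:arbitrary_param} yields \eqref{k e tau do par} by the same computation as before: $\sqrt{\Theta^{*\prime2}\cos^2\Phi^*+\Phi^{*\prime2}}=\Phi^{*\prime}\csc\delta^*$ and $[\arctan(\Phi^{*\prime}/(\Theta^{*\prime}\cos\Phi^*))]'=\delta^{*\prime}$. Write
\[
G(x)=\Phi^{*\prime}\tan\Phi^*\cot\delta^*+\delta^{*\prime},\qquad H(x)=\int\Phi^{*\prime}\csc\delta^*\,dx+\bm p .
\]
Then $\kappa_{\beta^*}\frac{ds^*}{dx}=H'$ and $\tau_{\beta^*}=G/\frac{ds^*}{dx}$. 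By \eqref{ds* do par}, $\frac{ds^*}{dx}=\lambda G\cot H$, so $\tau_{\beta^*}=\frac1\lambda\tan H$. It follows that
\[
\frac{\lambda\,\frac{d}{dx}\tau_{\beta^*}}{1+\lambda^2\tau_{\beta^*}^2}=\frac{H'\sec^2H}{1+\tan^2H}=H'=\kappa_{\beta^*}\frac{ds^*}{dx},
\]
which is exactly \eqref{eq 2 mannheim pair em x}. Hence $\beta^*$ is a Mannheim partner with constant $\lambda$. Here one must also require $\lambda G\cot H>0$ so that $x$ is an orientation-preserving parameter; I would state this as implicit in the hypotheses, together with $G\neq0$.

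\emph{Converse.} Let $\beta^*$ be a Mannheim partner and choose a coordinate system in which $T_{\beta^*}\neq\pm e_3$ and $\Phi^{*\prime}\neq0$ locally. This is possible unless $\beta^*$ is planar in a degenerate way, and Remark~\ref{rem:planar} handles those cases. Define $\delta^*$ by the cotangent formula, so \eqref{Theta par} holds with $\bm q$ an integration constant, and \eqref{k e tau do par} holds by Proposition~\ref{thm:arbitrary_param}. Since $\kappa_{\beta^*}>0$ and $\frac{ds^*}{dx}>0$, \eqref{eq 2 mannheim pair em x} says that $\frac{d}{dx}\arctan(\lambda\tau_{\beta^*})=\kappa_{\beta^*}\frac{ds^*}{dx}=\Phi^{*\prime}\csc\delta^*$. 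Integrating gives $\arctan(\lambda\tau_{\beta^*})=H$ for a suitable $\bm p$, so $\lambda\tau_{\beta^*}=\tan H$. Combining this with $\tau_{\beta^*}=G/\frac{ds^*}{dx}$ gives $\frac{ds^*}{dx}=\lambda G\cot H$, which is \eqref{ds* do par}. This step needs $\tau_{\beta^*}\neq0$, which holds because $\tau_{\beta^*}=\frac1\lambda\kappa_\beta/\tau_\beta$ by the Corollary.

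The main obstacle I expect is the bookkeeping of domains and signs. One must check that $\arctan(\lambda\tau_{\beta^*})\in(-\pi/2,\pi/2)$ matches the branch of $H$ chosen via $\bm p$. One must also ensure that the quantity $\Phi^{*\prime}/(\Theta^{*\prime}\cos\Phi^*)$ is legitimate where $\Theta^{*\prime}=0$; there I would work with $\delta^*$ directly, which stays smooth as it passes through $\pi/2$. Finally, the existence of a coordinate system with $\Phi^{*\prime}\neq0$ near each point should be argued by rotating the axes generically, so that the latitude of the tangent indicatrix is not critical at the chosen point.
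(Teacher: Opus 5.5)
Your proposal is correct and follows essentially the same route as the paper: define $\cot\delta^*={\Theta^*}'\cos\Phi^*/{\Phi^*}'$, read off $\kappa_{\beta^*}$ and $\tau_{\beta^*}$ from Proposition~\ref{thm:arbitrary_param}, integrate the Mannheim pair equation \eqref{eq 2 mannheim pair em x} to $\arctan(\lambda\tau_{\beta^*})=\int{\Phi^*}'\csc\delta^*\,dx+\bm p$, and solve for $\frac{ds^*}{dx}$. Beyond the paper, you also check the direct implication explicitly by differentiating $\lambda\tau_{\beta^*}=\tan H$, and you justify a local choice of axes with ${\Phi^*}'\neq0$; that local argument also removes the paper's unstated-in-the-theorem assumption that $\beta^*$ is not a helix.
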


\begin{proof}
Suppose \(\beta^{*}\) is not a helix. We will assume \(\dfrac{ds^{*}}{dx} >0\) and \({\Phi^{*}}'(x)>0\).

Define an auxiliary angle \(\delta^{*}(x)\) by
\begin{equation}\label{eq:cot_delta_eucl}
\cot\delta^{*}(x) = \dfrac{{\Theta^{*}}'(x)\cos\Phi^{*}(x)}{{\Phi^{*}}'(x)}.
\end{equation}
Then
\begin{align*}
\sqrt{({\Theta^{*}}')^2\cos^2\Phi^{*} +({\Phi^{*}}')^2} &= \sqrt{\cot^2\delta^{*}\,{(\Phi^{*}}')^2\sec^2\Phi^{*}\cos^2\Phi^{*} + ({\Phi^{*}}')^2}\\
&= \sqrt{\cot^2\delta^{*}\,({\Phi^{*}}')^2+({\Phi^{*}}')^2} = {\Phi^{*}}'\csc\delta^{*},
\end{align*}
and
\begin{equation}
\left[ \arctan\!\big( \dfrac{{\Phi^{*}}'(x)}{{\Theta^{*}}'(x)\cos\Phi^{*}(x)} \big)\right]' = {\delta^{*}}'(x).
\end{equation}

Thus, by \eqref{eq:kappa_x} and \eqref{eq:tau_x}, we obtain
\[
\kappa_{\beta^{*}}(x) = \frac{{\Phi^{*}}'(x)\csc\delta^{*}(x)}{\dfrac{ds^{*}}{dx}(x)},
\]
\[
\tau_{\beta^{*}}(x) = \frac{1}{\dfrac{ds^{*}}{dx}} \bigl[ {\Phi^{*}}'(x)\tan\Phi^{*}(x)\cot\delta^{*}(x) + {\delta^{*}}'(x) \bigr].
\]

Imposing the equation of characterization of a Mannheim pair \eqref{eq 2 mannheim pair em x}, whose left-hand side is $\kappa_{\beta^{*}}\dfrac{ds^{*}}{dx}={\Phi^{*}}'\csc\delta^{*}$, and integrating it, we obtain $\arctan(\lambda\tau_{\beta^{*}})=\displaystyle\int {\Phi^{*}}'\csc\delta^{*}\,dx+\bm{p}$, that is,
\[
\frac{1}{\lambda}\tan\!\left( \int {\Phi^{*}}'(x)\csc\delta^{*}(x)\,dx + \bm{p} \right) = \frac{1}{\dfrac{ds^{*}}{dx}} \bigl[ {\Phi^{*}}'(x)\tan\Phi^{*}(x)\cot\delta^{*}(x) + {\delta^{*}}'(x) \bigr].
\]

Solving for \(\dfrac{ds^{*}}{dx}\), we obtain
\[
\dfrac{ds^{*}}{dx} = \lambda \cot\!\left( \int {\Phi^{*}}'(x)\csc\delta^{*}(x)\,dx + \bm{p} \right) \bigl[ {\Phi^{*}}'(x)\tan\Phi^{*}(x)\cot\delta^{*}(x) + {\delta^{*}}'(x) \bigr].
\]

The function \(\Theta^{*}(x)\) is recovered from \eqref{eq:cot_delta_eucl}:
\[
\Theta^{*}(x) = \int {\Phi^{*}}'(x)\sec\Phi^{*}(x)\cot\delta^{*}(x)\,dx + \bm{q}.\]
\end{proof}

Similarly to the Mannheim condition, we can make one choice: $\Phi^{*}(x)=\phi(s^{*}(x))=x$ ($x$ as latitude parameter), and establish the corresponding forms of the equations of a Mannheim pair curve given by this theorem by function $\delta^{*}(x)$. Some suggestions to obtain some other forms of the equations (according to convenience) are to take one of the following possibilities: $\Theta^{*}(x)=x$, $s^{*}(x)=x$, $s^{*}(x)={\kappa_{\alpha^{*}}}^{-1}(x)$, $s^{*}(x)={\tau_{\alpha^{*}}}^{-1}(x)$, $s^{*}(x)={\sigma_{0}}^{-1}(x)$, $s^{*}(x)={\sigma_{1}}^{-1}(x)$, where $\sigma_{0}$ and $\sigma_{1}$ are given by \eqref{generalhelix_equation} and \eqref{slanthelix_equation}.

\begin{example}In the case ${\textbf{a}}=2$, ${\textbf{b}}=0$ above, let $t\in(\frac\pi2,\pi)$, which is the interval where $N_\beta=-B_{\beta^*}$ and $\frac{ds^*}{dt}>0$ hold with $\kappa_{\beta^*}>0$. In this interval the latitude of $T_{\beta^*}$ is decreasing, so we apply Theorem~\ref{thm:mannheim_partner_eucl} in the coordinate system obtained by the rotation $(x,y,z)\mapsto(x,-y,-z)$, in which ${\Phi^{*}}'>0$. In this system, the corresponding Mannheim partner curve $\beta^{*}$ is determined by the functions
\begin{equation}
\Phi^{*}(x) = -\arcsin\!\left(\frac{2(3\cos^2(x)+2)}{\sqrt{48\cos^4(x)+60\cos^2(x)+17}}\right),    
\end{equation}
\begin{equation}
\delta^{*}(x) = \operatorname{arccot}\!\left(\cot(x) \; \sqrt{48\cos^4(x)+60\cos^2(x)+17}\right),    
\end{equation}
\begin{equation}
\Theta^{*}(x) = \pi+\arctan\!\left( \frac{12\cos^{4}(x)-6\cos^{2}(x)-1}{12\sin(x) \cos^{3}(x)} \right),
\end{equation}
with $x=t$ and $\operatorname{arccot}$ taking values in $(0,\pi)$.
\end{example}

\section*{Acknowledgements}
\noindent The authors would like to express their sincere thanks to the editor and the anonymous reviewers for their helpful comments and suggestions.

\section*{Declarations}
\noindent\textbf{Funding.} There is no funding for this work. \textbf{Availability of data.} Not applicable. \textbf{Competing interests.} The authors declare that they have no competing interests. \textbf{Author contributions.} All authors contributed equally to the writing of this paper. All authors read and approved the final manuscript.

\end{document}